\documentclass[12pt]{article}

\usepackage{authblk}

\title{Existence and stability of time periodic solutions to 
nonlinear elastic wave equations \\
with viscoelastic terms}
\author[1]{Yoshiyuki Kagei}
\author[2]{Hiroshi Takeda\thanks{Corresponding author}}
\affil[1]{Department of Mathematics, 
Tokyo Institute of Technology, 
Meguro-ku, Tokyo, 152-8551, Japan}
\affil[2]{Department of Applied Mathematics, 
Faculty of Science, Fukuoka University, 
Nanakuma, Jonan-ku, Fukuoka, 814-0180, Japan}
\date{}



\setcounter{Maxaffil}{0}

\topmargin=0cm
\oddsidemargin=0cm
\textwidth=16cm
\textheight=24cm
\topmargin=-1cm

\usepackage{amssymb,amsmath,amsthm}
\usepackage[dvips]{graphicx}
\usepackage{bm} 

\newcommand{\R}{\mathbb R}

\newtheorem{thm}{Theorem}[section]

\newtheorem{prop}[thm]{Proposition}
\newtheorem{lem}[thm]{Lemma}
\theoremstyle{remark}

\theoremstyle{definition}


\begin{document}
\maketitle

\numberwithin{equation}{section}

\begin{abstract}
Existence and stability of time periodic solutions for nonlinear elastic wave equations with viscoelastic terms are established.
The existence of the time periodic solution is proved using the spectral decomposition of the linear principal part and the Poincar\'e map.
On the other hand, the proof of the stability of the time periodic solutions is generally problematic due to the slow time decay induced by the time periodic solutions. 
Based on the regularity estimates of the time periodic solution derived from the smoothing effect of the semigroup, 
sharp decay properties of the perturbation from the time periodic solution are proved,
which proves the stability.  
\end{abstract}

\noindent
\textbf{Keywords: }nonlinear elastic wave equation, damping terms, time periodic solution, stability \\
\noindent
\textbf{MSC2020: }Primary 35L72; Secondary 35B10, 35B65

\newpage
\section{Introduction}

In this paper, 
we investigate the system of quasi-linear hyperbolic equations with viscoelastic terms
\begin{equation} \label{eq:1.1}
	\begin{split}
		& \partial_{t}^{2} u -\mu \Delta u - (\lambda + \mu) \nabla {\rm div} u -\nu \Delta \partial_{t} u  =F(u)+g(t), \quad t \in \R, \quad x \in \R^{3}, \\
	\end{split}
\end{equation}
where, $u=u(t,x)={}^t\! (u_{1}, u_{2}, u_{3}):(0,\infty) \times \R^{3} \to \R^{3}$ is the unknown function; and the superscript ${}^t\! \cdot$ means the transpose of the matrix. 
The Lam\'e constants are supposed to satisfy  
\begin{equation} \label{eq:1.2}
\mu>0, \quad \lambda + 2 \mu >0, 
\end{equation}
and the viscosity parameter $\nu$ is a positive constant. 
We assume that the nonlinear term $F(u)$ is given by 
\[
\nabla u \nabla^{2} u,
\]
where $\nabla$ is the spatial gradient;
and $g=g(t,x):\R \times \R^{3} \to \R^{3}$ is a given time periodic external force,
that is,
$g$ has the form 
$g={}^t\! (g_{1}, g_{2}, g_{3})$ 
satisfying that there exists $T>0$ such that 
\begin{equation} \label{eq:1.3}
\begin{split}
g(t+T,x) =g(t,x)
\end{split}
\end{equation}
for $(t,x) \in \mathbb{R} \times \mathbb{R}^{3}$.
The viscoelastic body under consideration is deformed by a time periodic force. 
Our question is thus whether a time periodic deformation state of the viscoelastic body exists. 
In other words, 
we are interested in the existence and the stability of time periodic solution.
To the authors' best knowledge, 
so far there seems no result on the existence of time periodic solutions for \eqref{eq:1.1}. 

If $g=0$, several authors considered the Cauchy problem
\begin{equation} \label{eq:1.4}
\left\{
\begin{split}
& \partial_{t}^{2} u -\mu \Delta u - (\lambda + \mu) \nabla {\rm div} u -\nu \Delta \partial_{t} u  =F(u), \quad t>0, \quad x \in \R^{3}, \\
& u(0,x)=f_{0}(x), \quad \partial_{t} u(0,x)=f_{1}(x) , \quad x \in \R^{3}, 
\end{split}
\right.
\end{equation}
where $f_{j}={}^t\! (f_{j1}, f_{j2}, f_{j3})$ $(j=0,1)$ are given initial data.
For the detail see \cite{Ponce} and \cite{J-S} and the references therein.
When $\lambda + \mu=0$, based on the combination of the energy method and the $L^{p}$-$L^{q}$ type estimates,  
Ponce \cite{Ponce} proved the existence of the global solution for the small initial data
$(f_{0}, f_{1}) \in \{ \dot{H}^{s_{0}+1} \cap \dot{H}^{1} \cap \dot{W}^{1,1} \times \dot{H}^{s_{0}} \cap L^{1} \}^{3}$ with $s_{0}>\frac{7}{2}$ and obtained decay properties in the $L^{2}$-Sobolev spaces.
Jonov-Sideris \cite{J-S} also considered \eqref{eq:1.4} with $\lambda+\mu=0$ 
for $F(u)=F_{0}(u)+ \delta F_{1}(u)$, 
where $F_{0}(u)$ fulfills the Klainerman null condition (cf. \cite{K-Si}, \cite{A}, \cite{S})  
and $\delta F_{1}(u)$ represents the small perturbation.
They classified the lifespan of the solutions by the size of the ratio $\frac{\varepsilon \delta}{\nu}$ 
where $\varepsilon$ is the norms of the initial data in the suitable weighted $L^{2}$-Sobolev spaces, roughly speaking, 
$\varepsilon (1+|x|^{s_{0}})(f_{0}, f_{1}) \in \{ H^{s_{0}+1} \}^{3} \times \{ H^{s_{0}} \}^{3}$ with $s_{0} \ge 11$.
Especially, if $\frac{\varepsilon \delta}{\nu}$ is small, then the solution of \eqref{eq:1.4}  exists globally in time.
As for the situation including $\lambda + \mu \neq 0$, Kagei-Takeda \cite{K-T1} proved the existence of the global solutions for the small initial data 
$(f_{0}, f_{1}) \in \{ 
\dot{H}^{3} \cap \dot{H}^{1} \cap \dot{W}^{1,1} 
\}^{3} \times \{ \dot{H}^{1} \cap L^{1} \}^{3}$, obtained decay properties in the $L^{2}$-Sobolev spaces, 
smoothing estimates of the global solutions and asymptotic profiles as $t \to \infty$. 
Moreover they concluded the stability of $u=0$ to the Cauchy problem \eqref{eq:1.4} in the $L^{p}$-Sobolev spaces in \cite{K-T2}. 

In this paper, 
we prove the existence of a $T$-periodic solution, $u_{per}$, 
to the system \eqref{eq:1.1} in the class 
$$
\{ C([0,T];  \dot{W}^{3,p_{0}} \cap \dot{H}^{3} \cap \dot{H}^{1} ) 
\cap C^{1}([0,T]; W^{1, p_{0} }  \cap H^{1} ) \}^{3}
$$
for arbitrarily fixed $p_{0} \in [2, \infty)$
under the smallness condition on the external force $g$;
\begin{equation} \label{eq:1.5}
\begin{split}
\| g \|_{L^{1}(0,T; \dot{W}^{1,p_{0}} \cap \dot{H}^{1} \cap L^{1})} \ll 1.
\end{split}
\end{equation}
The time periodic solution satisfies the estimate 
\begin{equation} \label{eq:1.6}
\begin{split}
\| u_{per}(t) \|_{\dot{W}^{3,p_{0}} \cap \dot{H}^{3} \cap \dot{H}^{1}} 
+
\| \partial_{t} u_{per}(t) \|_{W^{1,p_{0}} \cap H^{1}} \le
C \| g \|_{L^{1}(0,T;\dot{W}^{1,p_{0}} \cap \dot{H}^{1} \cap L^{1})}. 
\end{split}
\end{equation}

Choosing $p_{0}=\frac{5}{2}$,
we conclude the stability of the time periodic solution by deriving sharp decay estimates of the perturbation $\tilde{u}= u-u_{per}$ for sufficiently small initial perturbations
 $(\tilde{u}(0), \partial_{t} \tilde{u}(0))=O(\varepsilon_{1})$ in the class 
 $\{ \dot{W}^{3,\frac{5}{2}} \cap \dot{H}^{3} \cap \dot{W}^{1,1} \}^{3} \times \{ \dot{W}^{1,\frac{5}{2}} \cap H^{1} \cap L^{1} \}^{3}$ with $\varepsilon_{1} \ll 1$
 and the external force $g$ with the assumption \eqref{eq:1.5}.
That is, we show the estimates 
\begin{align} 
			\| \nabla^{3} \tilde{u} (t) \|_{q} & 
			\le C \varepsilon_{1} (1+t)^{-\frac{3}{2}(1-\frac{1}{q})+\frac{1}{q}-\frac{3}{2}}, \quad q=\frac{5}{2}, 2, \label{eq:1.7} \\
			\| \nabla \tilde{u} (t) \|_{q} & 
			\le C \varepsilon_{1} (1+t)^{-\frac{3}{2}(1-\frac{1}{q})+\frac{1}{q}}, \quad q=\infty, 2,
			\label{eq:1.8} \\
			\| \nabla^{\alpha}  \partial_{t} \tilde{u} (t) \|_{q} & 
			\le C \varepsilon_{1} (1+t)^{-\frac{3}{2}(1-\frac{1}{q})+\frac{1}{q}-\frac{\alpha+1}{2}}, \quad q=\frac{5}{2}, 2,\, 0 \le \alpha \le 1 \label{eq:1.9}
\end{align}
for $t \ge 0$.
Here we note that the estimates \eqref{eq:1.7}-\eqref{eq:1.9} are sharp. 
Indeed,
the argument in \cite{K-T1} and \cite{K-T2} yields that
the powers in the right hand side in \eqref{eq:1.7}-\eqref{eq:1.9} are coincide with the ones of the estimates of the global solutions $u$ to \eqref{eq:1.4} 
with the small initial data with same regularity  $(f_{0}, f_{1}) \in \{ \dot{W}^{3,\frac{5}{2}} \cap \dot{H}^{3} \cap \dot{W}^{1,1} \}^{3} \times \{ \dot{W}^{1,\frac{5}{2}} \cap H^{1} \cap L^{1} \}^{3}$
which satisfy
\begin{align*} 
			C t^{-\frac{3}{2}(1-\frac{1}{q})+\frac{1}{q}-\frac{3}{2}} \le \| \nabla^{3} u (t) \|_{q} & 
			\le C t^{-\frac{3}{2}(1-\frac{1}{q})+\frac{1}{q}-\frac{3}{2}}, \quad q=\frac{5}{2}, 2,  \\
			C t^{-\frac{3}{2}(1-\frac{1}{q})+\frac{1}{q}} \le \| \nabla u (t) \|_{q} & 
			\le C t^{-\frac{3}{2}(1-\frac{1}{q})+\frac{1}{q}}, \quad q=\infty, 2,\\
			Ct^{-\frac{3}{2}(1-\frac{1}{q})+\frac{1}{q}-\frac{\alpha+1}{2}} \le \| \nabla^{\alpha}  \partial_{t} u (t) \|_{q} & 
			\le C t^{-\frac{3}{2}(1-\frac{1}{q})+\frac{1}{q}-\frac{\alpha+1}{2}}, \quad q=\frac{5}{2}, 2,\, 0 \le \alpha \le 1
\end{align*}
for $t \gg 1$.
Roughly speaking, the global solution $u$ to \eqref{eq:1.4} behaves like the diffusion wave defined by 
\begin{equation*} 
\begin{split}
G(t,x)
:=\mathcal{F}^{-1} \left[
e^{-\frac{\nu |\xi|^{2}}{2}t} \frac{\sin (|\xi| t)}{|\xi| }
\right]
\end{split}
\end{equation*}
as $t \to \infty$.
For the decay properties of the diffusion wave, we refer to \cite{H-Z1}, \cite{H-Z2}, \cite{K-S} and \cite{I1}.

To give a proof of the existence of the time periodic solution, we use 
the notion of the Poincar\'e map.
Applying Duhamel's formula (cf \cite{C-H}) to \eqref{eq:1.1} with the semigroup 
$\{ e^{t \mathcal{A}} \}_{t \ge 0}$ generated by
\begin{equation*}
\begin{split}
\mathcal{A} :=
\begin{pmatrix}
0 & I_{3} \\
\mu \Delta I_{3} +(\lambda +\mu) \nabla {\rm div} & \nu \Delta I_{3}  
\end{pmatrix},
\end{split}
\end{equation*}
where $I_{3}:= \text{diag}(1,1,1)$,
we have the expression of the solution as follows:
\begin{equation} \label{eq:1.10}
\begin{split}
{\bf u}(t) = e^{t \mathcal{A} } {\bf u}(0)+ \int_{0}^{t} e^{(t-s) \mathcal{A} } 
({\bf F}({\bf u})(s) + {\bf G}(s) )ds,
\end{split}
\end{equation}
where
\begin{equation*}
\begin{split}
{\bf u} :=
\begin{pmatrix}
u  \\
\partial_{t} u
\end{pmatrix},
\qquad 
{\bf F}({\bf u}) := 
\begin{pmatrix}
0 \\
{F}(u)
\end{pmatrix}, \qquad 
{\bf G} := 
\begin{pmatrix}
0 \\
g(t)
\end{pmatrix}.
\end{split}
\end{equation*}
We look for a solution of \eqref{eq:1.10} satisfying
\begin{equation}\label{*}
{\bf u}(T)={\bf u}(0).
\end{equation}
By the condition \eqref{*}, 
we have
\begin{equation} \label{eq:1.11} 
\begin{split}
{\bf u}(0)=   (I_{6}-e^{T \mathcal{A} })^{-1} \int_{0}^{T} e^{(T-s) \mathcal{A} } 
\left( {\bf F} ({\bf u})(s) 
+ 
{\bf G}(s) 
\right)
ds.
\end{split}
\end{equation}
Now we denote 
\begin{equation*}
\begin{split}
\hat{\mathcal{A}}_{\xi} :=
\begin{pmatrix}
0 & I_{3} \\
-\mu |\xi|^{2} I_{3} -(\lambda +\mu) \xi \otimes \xi & -\nu |\xi|^{2} I_{3}  
\end{pmatrix}
\in M(\mathbb{R};6).
\end{split}
\end{equation*}
Then by the formal observation 
\begin{equation*} 
\begin{split}
 I_{6}-e^{T \hat{\mathcal{A}}_{\xi} } \sim -T \hat{\mathcal{A}}_{\xi} + O(|\xi|^{4})
\end{split}
\end{equation*}
for $|\xi| \ll 1$ and denoting $R_{1}:= \frac{\xi}{|\xi|} \otimes \frac{\xi}{|\xi|} \in M(3;\mathbb{R})$, $R_{2}:=I_{3}- R_{1} \in M(3;\mathbb{R})$ and 
\begin{equation*} 
\begin{split}
\tilde{R}:= \frac{1}{\lambda+2 \mu} R_{1}  +
\frac{1}{\mu} R_{2},
\end{split}
\end{equation*}
we see that 
\begin{equation*} 
\begin{split}
(I_{6}-e^{T \hat{\mathcal{A}}_{\xi} })^{-1} \sim -T^{-1} \hat{\mathcal{A}}_{\xi}^{-1}
=
-\frac{1}{T}
\begin{pmatrix}
\nu \tilde{R} & -\frac{1}{|\xi|^{2}} \tilde{R} \\
I_{3}  & 0
\end{pmatrix}
\sim 
-\frac{1}{T}
\begin{pmatrix}
\nu I_{3} & -\widehat{(-\Delta)^{-1}} I_{3} \\
I_{3}  & 0
\end{pmatrix} 
\end{split}
\end{equation*}
as $|\xi| \to 0$,
and 
\begin{equation*} 
\begin{split}
 (I_{6}-e^{T \hat{\mathcal{A}}_{\xi} })^{-1} \sim I_{6}
\end{split}
\end{equation*}
as $|\xi| \to \infty$, which implies that the high frequency part of ${\bf u}(0)$ is tractable. 
The argument here will be rigorously justified in Section \ref{sec:2} by the spectral decomposition of 
$\hat{\mathcal{A}}_{\xi}$ in the Fourier space.
Therefore, we formally obtain the equation of the low frequency part ${\bf u}_{L}$ of \eqref{eq:1.10} with \eqref{eq:1.11} by
\begin{equation} \label{eq:1.12}
\begin{split}
{\bf u}_{L}(t) \sim \int_{0}^{T} \frac{e^{(t+T-s) \mathcal{A} } }{-\Delta} ({\bf F}({\bf u})(s) + {\bf G}(s))_{L} ds+ \int_{0}^{t} e^{(t-s) \mathcal{A} } ({\bf F}({\bf u})(s) + {\bf G}(s))_{L} ds,
\end{split}
\end{equation}
and the one of the high frequency part ${\bf u}_{H}$ of \eqref{eq:1.10} as follows:
\begin{equation} \label{eq:1.13}
\begin{split}
{\bf u}_{H}(t) \sim \int_{0}^{T} e^{(t+T-s) \mathcal{A} } ({\bf F}({\bf u})(s) + {\bf G}(s))_{H} ds
+\int_{0}^{t} e^{(t-s) \mathcal{A} } ({\bf F}({\bf u})(s) + {\bf G}(s))_{H} ds,
\end{split}
\end{equation}
where ${\bf u}={\bf u}_{L}+{\bf u}_{H}$; 
and $({\bf F}({\bf u})(s) + {\bf G}(s))_L$ and $({\bf F}({\bf u})(s) + {\bf G}(s))_H$ denote the low and high frequency parts of $({\bf F}({\bf u})(s) + {\bf G}(s))$, 
respectively.

It is not difficult to see that the leading factor in the sense of regularity of the periodic solution is given by the first component in \eqref{eq:1.12} and \eqref{eq:1.13}.
Therefore we investigate the operator $Q(t) \ast$ formally defined by 
\begin{equation*} 
\begin{split}
Q(t-s) \ast \sim 
\begin{cases}
& \dfrac{
e^{(t+T-s) \mathcal{A} }
}{-\Delta}, \quad |\xi| \ll 1, \\
& e^{(t+T-s) \mathcal{A}} , \quad |\xi| \gg 1.
\end{cases}
\end{split}
\end{equation*}
Precise definition of $Q(t) \ast$ is given in \eqref{eq:2.5} below.
For $k=2,3$, 
the estimates of $Q(t) \ast$ in $\dot{W}^{k,p}$ are proved by the $L^{p}$-$L^{p}$ boundness of the Riesz transform (Lemma \ref{Lem:3.2}) and estimates of the semigroup $e^{t \mathcal{A}}$ (Lemma \ref{Lem:3.1}).
On the other hand, we apply the Hardy-Littlewood-Sobolev inequality (Lemma \ref{Lem:3.3}) to show the low frequency estimates of $Q(t) \ast$ in $\dot{W}^{1,p}$ to ensure the integrability near $|\xi|=0$.
Based on the estimates of $Q(t) \ast$ in $\dot{W}^{k,p}$ for $k=1,2,3$, 
we conclude the existence of the time periodic solution with the application of the contraction mapping principle.

For the proof of the stability of the time periodic solution, 
we assume $p_{0}=\frac{5}{2}$.
Indeed, the perturbation $\tilde{u}$ is expected to satisfy same decay properties in 
$\dot{W}^{3, \frac{5}{2}}$ and $\dot{W}^{1, \infty}$ by \eqref{eq:1.7} with $q=\frac{5}{2}$
and \eqref{eq:1.8} with $q=\infty$, 
which is useful to obtain sharp decay properties from the high frequency part of $\tilde{u}$. 
For the detail, see subsection 6.2 below. 

In the case $p_{0}=\frac{5}{2}$,
we also note that the regularity estimates of the time periodic solution $u_{per}$ are stated as follows: 
\begin{equation} \label{eq:1.14}
\begin{split}
\| \nabla u_{per}(t)  \|_{p} & \le C, \quad \frac{3}{2} < p \le \infty, \\
\| \nabla^{2} u_{per}(t)  \|_{p} & \le C, \quad 1 < p \le 15, \\
\| \nabla^{3} u_{per}(t)  \|_{p} & \le C, \quad 1 < p \le \frac{5}{2}
\end{split}
\end{equation}
for $t \in [0,T]$. 
The point in \eqref{eq:1.14} is estimates in the case $p<2$ which are useful in estimating the linearization of $F(u)$ at $u_{per}$ .
The proof of \eqref{eq:1.14} is based on the estimates of $Q(t) \ast$ derived in Section 4, the smoothing effect of the semigroup $\{ e^{t \mathcal{A}} \}_{t \ge 0}$ by \cite{Shibata} and \cite{K-S} for the low frequency part \eqref{eq:1.13} and by \cite{K-T1} and \cite{K-T2} for the high frequency part \eqref{eq:1.14}.
Especially, the observation for the high frequency part of the semigroup in \cite{K-T2}
is applied to show Proposition \ref{prop:4.4} which plays an important role to have the estimates \eqref{eq:1.14} for $p<2$.  

On the other hand,
recalling that $u=\tilde{u}+u_{per}$,
we see that
\begin{equation*}
\begin{split}
F(u)-F(u_{per})= F(\tilde{u})+\nabla \tilde{u} \nabla^{2} u_{per} 
+ \nabla u_{per} \nabla^{2} \tilde{u}.
\end{split}
\end{equation*}
To obtain the estimates \eqref{eq:1.7}-\eqref{eq:1.9},
we need to derive the sharp decay properties from $F(u)-F(u_{per})$.
Such kind of situation also appears in the context of the fluid dynamical equations and the authors overcome the difficulty by the use of 
smoothing effect and space-time integral estimates of the heat semigroup 
(cf. \cite{K-N}, \cite{Taniuchi}, \cite{Yamazaki}) and 
the Hardy inequality (cf. \cite{K-K}, \cite{K-T}, \cite{Tsuda}).
In contrast to these cases, for the proof of stability, 
we apply the regularity estimates \eqref{eq:1.14} to obtain sharp decay properties for 
$\nabla \tilde{u} \nabla^{2} u_{per} + \nabla u_{per} \nabla^{2} \tilde{u}$.
As a result, we regard them as a perturbation of the semigroup $e^{t \mathcal{A}}$
and we prove the global existence of the perturbation $\tilde{u}$ and sharp decay properties of $\tilde{u}$.
For example, the estimates \eqref{eq:1.14} enables us to have 
\begin{equation*}
\begin{split}
& \| \nabla \tilde{u}(t) \nabla^{2} u_{per}(t) \|_{1} \le 
\left\| \nabla \tilde{u}(t) \right\|_{10} \left\| \nabla^{2} u_{per}(t) \right\|_{\frac{10}{9}},\\
& \| \nabla u_{per}(t) \nabla^{2} \tilde{u}(t) \|_{1} \le \| \nabla u_{per}(t) \|_{\frac{5}{3}} 
\| \nabla^{2} \tilde{u}(t) \|_{\frac{5}{2}}
\end{split}
\end{equation*}
by the H\"{o}lder inequality. 
Proving stability from this perspective is our mathematical contribution.

This paper is organized as follows.
In section 2, we state the main results precisely. 
To do so, we begin with the summary of the notation and formulation of the problem.
Section 3 is devoted to the preliminaries where we lists the facts used throughout this paper, 
the estimates for the evolution operators for the linearized problem and the summary of the basic inequalities of the real analysis .
In section 4, we obtain the $L^{p}$-$L^{q}$ type estimates of the evolution operator $Q(t) \ast$, which plays an essential role to show Theorems \ref{thm:2.1} and \ref{thm:2.3}.  
We prove the main results in sections 5-6. 
In section 5, we derive the existence theorem and regularity estimates of the time periodic solutions corresponding the regularity of the external force $g$.
Section 6 establishes the stability of the time periodic solution under the assumption 
\eqref{eq:1.5} with $p_{0}=\frac{5}{2}$,
by proving the estimates 
\eqref{eq:1.7}-\eqref{eq:1.9}.

\section{Main results} \label{sec:2}
In this section, 
we state the main results of this paper, rigorously.
For this aim, 
we firstly set up the notation which will be used throughout the paper and 
formulate the periodic solutions to \eqref{eq:1.1} by an integral equation.

\subsection{Notation}
In this subsection we summarize standard notations and notions.
We denote by $L^{p}=L^{p}(\R^{3})$ the usual Lebesgue space equipped with the norm $\| \cdot \|_{p}$ for $1 \le p \le \infty$. 
The symbol $W^{k,p}(\R^{3})$ denotes the $L^{p}$-Sobolev spaces equipped with the norm $\| \cdot \|_{W^{k,p}(\R^{3})} := \| \cdot \|_{p}+\|  \nabla_{x}^{k} \cdot \|_{p}$.
We denote the homogeneous Sobolev space $\dot{W}^{k,p}(\R^{3})$ for $k \in \mathbb{N}$ and $1\le p \le \infty$ by
\begin{equation*}
\dot{W}^{k,p}(\mathbb{R}^{3}) := 
\overline{
C^{\infty}_{0}(\R^{3})
}^{\| \cdot \|_{\dot{W}^{k,p}(\mathbb{R}^{3})}},
\end{equation*}
where $\overline{
C^{\infty}_{0}(\R^{3})
}^{\| \cdot \|_{\dot{W}^{k,p}(\mathbb{R}^{3})}}$ 
stands for the completion of $C^{\infty}_{0}(\R^{3})$ by the norm $\| \cdot \|_{\dot{W}^{k,p}(\mathbb{R}^{3})}$ and 
$\| \cdot \|_{\dot{W}^{k,p}(\mathbb{R}^{3})} := 
 \|\nabla_{x}^{k} \cdot \|_{p}$.
When $p=2$, we define $H^{k}(\R^{3})=W^{k,2}(\R^{3})$ and $\dot{H}^k(\mathbb{R}^{3}) := \dot{W}^{k,2}(\mathbb{R}^{3})$ for $k \ge 0$.
We often abbreviate the domain $\R^{3}$ in the notation of the function spaces and norms, 
for short.
For an interval $I \subset \R$ and a Banach space $X$, 
we denote by $C^{k}(I;X)$ the space of $X$-valued $k$-times continuously differentiable functions on $I$.
We also define $C^{k}_{per}(\R;X)$ and $L^{p}_{per}(\R;X)$ by the space of $X$-valued $T$-periodic $k$-times continuously differentiable functions on $\mathbb{R}$ and 
the Bochner space of $X$-valued $T$-periodic functions on $\mathbb{R}$ for $1 \le p \le \infty$,
respectively.

The notation $\hat{f}$ means that the Fourier transform of a function $f$ defined by 
\begin{align*}
\hat{f}(\xi) := (2 \pi)^{-\frac{3}{2}}
\int_{\R^{3}} e^{-i x \cdot \xi} f(x) dx.
\end{align*}
Also, we write $\mathcal{F}^{-1}[f]$ or $\check{f}$ the inverse
Fourier transform of $f$.

\subsection{Semigroup formulation of the solution of linear problem}
In this subsection, we mention the solution formulas of the following linearized system of \eqref{eq:1.1}:
\begin{equation} \label{eq:2.1}
	\left\{
	\begin{split}
		& \partial_{t}^{2} u -\mu \Delta u - (\lambda + \mu) \nabla {\rm div} u -\nu \Delta \partial_{t} u  =0, \quad t>0, \quad x \in \R^{3}, \\
		& u(0,x)=f_{0}(x), \quad \partial_{t} u(0,x)=f_{1}(x) , \quad x \in \R^{3},
	\end{split}
	\right.
\end{equation}
introducing the notion of the semigroup generated by the linear principal parts.
We also discuss the spectral decomposition of the semigroup.

First, we define the matrices 
\begin{equation*}
\begin{split}
\mathcal{A} :=
\begin{pmatrix}
0 & I_{3} \\
\mu \Delta I_{3} +(\lambda +\mu) \nabla {\rm div} & \nu \Delta I_{3}  
\end{pmatrix}
\end{split}
\end{equation*}
and
\begin{equation*}
\begin{split}
\hat{\mathcal{A}}_{\xi} :=
\begin{pmatrix}
0 & I_{3} \\
-\mu |\xi|^{2} I_{3} -(\lambda +\mu) \xi \otimes \xi & -\nu |\xi|^{2} I_{3}  
\end{pmatrix}
\in M(\mathbb{R};6),
\end{split}
\end{equation*}
where $M(\mathbb{R};n)$ denotes the set of square matrices of size $n$ over $\mathbb{R}$
and $I_{n} \in M(\mathbb{R};n)$ stands for the identity matrix for $n \in \mathbb{N}$. 
Denoting that $v= \partial_{t} u$ and 
$${\bf u} :=
\begin{pmatrix}
u  \\
v
\end{pmatrix},$$
we have 
\begin{equation*}
\begin{split}
\frac{d}{dt} {\bf u} = \mathcal{A} {\bf u}.
\end{split}
\end{equation*}
Observing that 
\begin{equation*}
\begin{split}
\det (\sigma I_{6} -\hat{\mathcal{A}}_{\xi} ) & = (\sigma^{2} + \nu |\xi|^{2} \sigma+ (\lambda + 2 \mu ) |\xi|^{2})
(\sigma^{2} + \nu |\xi|^{2} \sigma + \mu  |\xi|^{2})^{2} \\
& =(\sigma -\sigma_{1,+})(\sigma -\sigma_{1,-})(\sigma -\sigma_{2,+})^{2}(\sigma -\sigma_{2,-})^{2},
\end{split}
\end{equation*}
we obtain
\begin{equation*}
\begin{split}
\sigma_{1, \pm} := \frac{- \nu |\xi|^{2} \pm \sqrt{ \nu^{2} |\xi|^{4} - 4 (\lambda +2 \mu) |\xi|^{2}} }{2}
\end{split}
\end{equation*}
and 
\begin{equation*}
\begin{split}
\sigma_{2, \pm} := \frac{- \nu |\xi|^{2} \pm \sqrt{ \nu^{2} |\xi|^{4} - 4 \mu |\xi|^{2}} }{2}.
\end{split}
\end{equation*}
%
%
By the characteristic roots $\sigma_{1, \pm}$ and $\sigma_{2, \pm}$, 
we have the expression of the solution of \eqref{eq:2.1} in the Fourier space as follows;
\begin{equation*}
\begin{split}
\hat{u}_{lin}(t) = \hat{K}_{0}(t,\xi) \hat{f}_{0} 
+\hat{K}_{1}(t,\xi) \hat{f}_{1},
\end{split}
\end{equation*}
where
\begin{equation*}
\begin{split}
\hat{K}_{k}(t,\xi) = \hat{K}_{k}^{(1)}(t,\xi)  +\hat{K}_{k}^{(2)}(t,\xi) 
\end{split}
\end{equation*}
for $k=0,1$ and 
\begin{equation*}
\begin{split}
\hat{K}_{0}^{(j)}(t,\xi):= 
\frac{-\sigma_{j,-} e^{\sigma_{j,+} t}+\sigma_{j,+} e^{\sigma_{j,-} t} }
{\sigma_{j,+}-\sigma_{j,-}} R_{j}, \quad 
	\hat{K}_{1}^{(j)}(t,\xi):= 
	\frac{
		e^{\sigma_{j,+} t}-e^{\sigma_{j,-}t}
	}{\sigma_{j,+}-\sigma_{j,-}} R_{j}
\end{split}
\end{equation*}
for $j=1,2$, where $R_{1}:= \frac{\xi}{|\xi|} \otimes \frac{\xi}{|\xi|} \in M(3;\mathbb{R})$, $R_{2}:=I_{3}- R_{1} \in M(3;\mathbb{R})$.
We introduce the notation
\begin{equation*}
\begin{split}
\mathcal{R}_{j} :=
\begin{pmatrix}
R_{j} & 0 \\
0 & R_{j}
\end{pmatrix}
\in M(\mathbb{R};6)
\end{split}
\end{equation*}
and
\begin{equation*}
\begin{split}
\mathcal{P}_{j,+} &:= 
\frac{1}{\sigma_{j,+}-\sigma_{j,-}}
\begin{pmatrix}
-\sigma_{j,-} I_{3} & I_{3}  \\
-\sigma_{j,+} \sigma_{j,-} I_{3} & \sigma_{j,+} I_{3}
\end{pmatrix}
\mathcal{R}_{j}, \\
\mathcal{P}_{j,-} & := 
\frac{1}{\sigma_{j,+}-\sigma_{j,-}}
\begin{pmatrix}
\sigma_{j,+} I_{3}  & -I_{3}  \\
\sigma_{j,+} \sigma_{j,-} I_{3} & -\sigma_{j,-} I_{3}
\end{pmatrix}
\mathcal{R}_{j}
\end{split}
\end{equation*}
for $j=1$, $2$.
Then it is easy to see that 
\begin{equation*}
\begin{split}
& \hat{\mathcal{A}}_{\xi} = \sigma_{1,+} \mathcal{P}_{1,+} +\sigma_{1,-} \mathcal{P}_{1,-} + \sigma_{2,+} \mathcal{P}_{2,+} + \sigma_{2,-} \mathcal{P}_{2,-}, \\
& I_{6}= \mathcal{P}_{1,+} +\mathcal{P}_{1,-} + \mathcal{P}_{2,+} + \mathcal{P}_{2,-},\\
& \mathcal{P}_{1,\pm}^{2}=\mathcal{P}_{1,\pm}, 
\quad \mathcal{P}_{2,\pm}^{2} =\mathcal{P}_{2,\pm}, \\
& \mathcal{P}_{1,\pm} \mathcal{P}_{2,\pm}  =\mathcal{P}_{2,\pm} \mathcal{P}_{1,\pm} =  0.
\end{split}
\end{equation*}
Therefore we arrive at the identities
\begin{equation} \label{eq:2.2}
\begin{split}
\hat{{\bf u}} & =
e^{t \hat{\mathcal{A}}_{\xi}} 
\begin{pmatrix}
\hat{f}_{0} \\
\hat{f}_{1}
\end{pmatrix}
,\\
e^{t \hat{\mathcal{A}}_{\xi}} 
& =\begin{pmatrix}
\hat{K}_{0}(t,\xi) 
 & \hat{K}_{1}(t,\xi)    \\
\partial_{t} \hat{K}_{0}(t,\xi) 
 & \partial_{t} \hat{K}_{1}(t,\xi) 
\end{pmatrix} \\
& = e^{\sigma_{1,+} t} \mathcal{P}_{1,+} + e^{\sigma_{1,-} t} \mathcal{P}_{1,-} + e^{\sigma_{2,+} t} \mathcal{P}_{2,+} + e^{\sigma_{2,-} t} \mathcal{P}_{2,-}
\end{split}
\end{equation}
and 
\begin{equation} \label{eq:2.3}
\begin{split}
(I_{6}-e^{t\hat{\mathcal{A}}_{\xi} })^{-1} 
= \sum_{j=1}^{2} \left\{  (1-e^{\sigma_{j,+} t})^{-1} 
\mathcal{P}_{j,+} + (1-e^{\sigma_{j,-} t})^{-1} \mathcal{P}_{j,-} 
\right\}.
\end{split}
\end{equation}
\subsection{Representation formula of the periodic solution}
This subsection is devoted to the reformulation of the system \eqref{eq:1.1} into the integral equation using the notion of the Poincar\'e map.
At first, we denote the system \eqref{eq:1.1} by  
\begin{equation} \label{**}
\begin{split}
\frac{d}{dt} {\bf u} = \mathcal{A} {\bf u}+ {\bf F}+ {\bf G},
\end{split}
\end{equation}
where 
\begin{equation*}
\begin{split}
{\bf u} :=
\begin{pmatrix}
u  \\
v
\end{pmatrix},
\qquad 
{\bf F}({\bf u}) := 
\begin{pmatrix}
0 \\
{F}(u)
\end{pmatrix}, \qquad 
{\bf G} := 
\begin{pmatrix}
0 \\
g(t)
\end{pmatrix}.
\end{split}
\end{equation*}
We look for a solution of \eqref{**} satisfying 
\begin{equation} \label{***}
u(T)=u(0).
\end{equation}
By applying the Duhamel principle and \eqref{***}, 
we have the following integral equation;
\begin{equation} \label{eq:2.4}
\begin{split}
{\bf u}(t) = e^{t \mathcal{A} } {\bf u}(0)+ \int_{0}^{t} e^{(t-s) \mathcal{A} } 
({\bf F}({\bf u})(s) + {\bf G}(s)) ds,
\end{split}
\end{equation}
where ${\bf u}(0)$ is defined by 
\begin{equation*} 
\begin{split}
{\bf u}(0)=  \left[ (I_{6}-e^{T \mathcal{A} })^{-1} \int_{0}^{T} e^{(T-s) \mathcal{A} } 
\left( {\bf F} ({\bf u})(s) 
+ 
{\bf G}(s) 
\right)
ds \right].
\end{split}
\end{equation*}

To solve \eqref{eq:2.4},
we decompose the operation of the semigroup in 
$e^{t \mathcal{A} } {\bf u}(0)$ as follows:
\begin{equation*}
\begin{split}
e^{t \hat{\mathcal{A}}_{\xi} } (I_{6}-e^{T\hat{\mathcal{A}}_{\xi} })^{-1} e^{(T-s) \hat{\mathcal{A}}_{\xi}  }  
= 
\sum_{j=1}^{2}
\left(
\dfrac{
e^{\sigma_{j,+} (t+T-s)}
}{1-e^{\sigma_{j,+} T}
}\mathcal{P}_{j,+} + 
\dfrac{e^{\sigma_{j,-} (t+T-s)} 
}
{1-e^{\sigma_{j,-} T} } \mathcal{P}_{j,-} 
\right) 
\end{split}
\end{equation*}
by \eqref{eq:2.2} and \eqref{eq:2.3}.
Denoting 
\begin{equation*}
\begin{split}
\hat{Q}^{(j)}(t, \xi) & := 
\frac{1}{\sigma_{j,+}-\sigma_{j,-}}
\left(
\frac{e^{\sigma_{j,+} (t+T)} }{ 1-e^{\sigma_{j,+} T} } -\frac{e^{\sigma_{j,-} (t+T)} }{1-e^{\sigma_{j,-} T} }
\right) R_{j}, 
\end{split}
\end{equation*}
for $j=1$, $2$ and 
\begin{equation*}
\begin{split}
Q(t,x) := Q^{(1)}(t,x)+Q^{(2)}(t,x),
\end{split}
\end{equation*}
we arrive at the following integral equation:
%
\begin{equation} \label{eq:2.5}
\begin{split}
u_{per}(t) & = \int_{0}^{T} Q(t-s) \ast (F(u_{per})(s) +g(s)) ds \\
& \quad + \int_{0}^{t} K_{1}(t-s) \ast (F(u_{per})(s) +g(s)) ds
\end{split}
\end{equation}
by \eqref{eq:2.4}.
\subsection{Statements of main results}
Our first result is the existence of a time periodic solution.  
\begin{thm} \label{thm:2.1}
Let $2 \le p_{0} < \infty$, $g \in Y_{0}:= \{  L^{1}_{per}(\R; \dot{W}^{1,p_{0}} \cap \dot{H}^{1} \cap L^{1} ) \}^{3}$.
Then there exists a constant $\varepsilon_{0}>0$ such that if
$$\| g \|_{L^{1}(0,T;\dot{W}^{1,p_{0}} \cap \dot{H}^{1} \cap L^{1} )} \le \varepsilon_{0},$$
there exists a unique periodic solution of \eqref{eq:1.1} with \eqref{eq:1.2}, $u_{per}$, 
in the sense of the integral equation \eqref{eq:2.5} in the class  
$$
\{ C_{per}(\R; \dot{W}^{3,p_{0}}  \cap \dot{H}^{3} \cap \dot{H}^{1} ) 
\cap C_{per}^{1}(\R; W^{1,p_{0}}  \cap H^{1} ) \}^{3}
$$
satisfying 
\begin{equation} \label{eq:2.6}
\begin{split}
\sum_{q= p_{0},2} (\| \nabla^{3} u_{per}(t) \|_{q} +\| \nabla \partial_{t} u_{per}(t) \|_{q} 
+\| \partial_{t} u_{per}(t) \|_{q} )+\| \nabla u_{per}(t) \|_{2} \le \varepsilon_{0},
\end{split}
\end{equation}
for $t \in [0,T]$.
\end{thm}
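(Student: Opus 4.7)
The plan is to apply the contraction mapping principle to the integral equation \eqref{eq:2.5}. Let $X$ denote the function space appearing in the statement of the theorem, equipped with the norm obtained by taking the supremum over $t \in [0,T]$ of the left-hand side of \eqref{eq:2.6}, and let $Y = \{L^{1}_{per}(\R; \dot{W}^{1,p_{0}} \cap \dot{H}^{1} \cap L^{1})\}^{3}$. I would define the nonlinear map
$$\Phi[u](t) = \int_{0}^{T} Q(t-s) \ast (F(u)(s) + g(s))\,ds + \int_{0}^{t} K_{1}(t-s) \ast (F(u)(s) + g(s))\,ds$$
and seek its fixed point in the closed ball $B_{R} = \{u \in X : \|u\|_{X} \le R\}$ with $R$ proportional to $\|g\|_{Y}$, to be chosen below.

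The first ingredient is the linear estimate. The $L^{p}$-$L^{q}$ bounds for $Q(t)\ast$ in $\dot{W}^{k,p}$ with $k = 1,2,3$, to be established in Section 4 (resting on the $L^{p}$-boundedness of the Riesz transform from Lemma \ref{Lem:3.2} for the high frequency part and on the Hardy--Littlewood--Sobolev inequality from Lemma \ref{Lem:3.3} for the low frequency part), together with Lemma \ref{Lem:3.1} for the Duhamel term with kernel $K_{1}$, should yield
$$\|\Phi[u]\|_{X} \le C\,\|F(u) + g\|_{L^{1}(0,T; \dot{W}^{1,p_{0}} \cap \dot{H}^{1} \cap L^{1})};$$
control of the time-derivative components $\partial_{t}u$ and $\nabla \partial_{t}u$ in $X$ comes from differentiating the integral representation and invoking analogous bounds for $\partial_{t}Q$ and $\partial_{t}K_{1}$ read off from \eqref{eq:2.2}. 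The second ingredient is the nonlinear estimate. Since $F(u) = \nabla u\,\nabla^{2} u$ and $\nabla F(u) \sim \nabla^{2} u\,\nabla^{2} u + \nabla u\,\nabla^{3} u$, applying H\"older's inequality and Gagliardo--Nirenberg interpolation between $\|\nabla u\|_{2}$ (the $\dot{H}^{1}$ part of $X$) and $\|\nabla^{3} u\|_{2} \cap \|\nabla^{3} u\|_{p_{0}}$ (the top-regularity part) produces the quadratic bound
$$\|F(u)\|_{L^{1}(0,T; \dot{W}^{1,p_{0}} \cap \dot{H}^{1} \cap L^{1})} \le CT\,\|u\|_{X}^{2}.$$

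Combining the two ingredients yields $\|\Phi[u]\|_{X} \le C_{1}\|g\|_{Y} + C_{2}\|u\|_{X}^{2}$, while the bilinear structure of $F$ produces the Lipschitz estimate $\|\Phi[u_{1}] - \Phi[u_{2}]\|_{X} \le C_{2}(\|u_{1}\|_{X} + \|u_{2}\|_{X})\|u_{1} - u_{2}\|_{X}$. Choosing $R = 2C_{1}\|g\|_{Y}$ and then taking $\|g\|_{Y}$ so small that $4C_{1}C_{2}\|g\|_{Y} < 1$ makes $\Phi$ a contractive self-map of $B_{R}$, and Banach's fixed point theorem produces the unique $u_{per} \in B_{R}$ solving \eqref{eq:2.5}; periodicity in $t$ is built into the representation, which was derived precisely from the constraint ${\bf u}(T) = {\bf u}(0)$, and the $C^{1}_{per}$-in-$t$ regularity follows by differentiating \eqref{eq:2.5} and reusing the estimates. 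The main obstacle I expect is the nonlinear estimate at the low-regularity endpoints, namely controlling $\|\nabla u\,\nabla^{2} u\|_{1}$ and $\|\nabla^{2} u\,\nabla^{2} u\|_{2}$: neither $\nabla^{2} u$ nor high-$L^{p}$ norms of $\nabla u$ are directly in $X$, and both must be recovered by interpolating between $\nabla u \in L^{2}$ and $\nabla^{3} u \in L^{2} \cap L^{p_{0}}$. The deliberate inclusion of $\dot{H}^{1}$ (with no corresponding $\dot{H}^{2}$ norm) in $X$ is what makes this interpolation close, and verifying this closure carefully is the heart of the argument.
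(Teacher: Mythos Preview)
Your proposal is correct and matches the paper's own proof essentially step for step: the paper sets up exactly this contraction on the ball $\mathbb{B}_R$ in the space $X_1$ with the same norm, splits $\Phi$ into the $Q$-term and the $K_1$-Duhamel term, invokes Propositions~\ref{prop:4.2} and~\ref{prop:4.4} and Lemma~\ref{Lem:3.1} for the linear bounds, and closes the quadratic nonlinear estimates $\|F(u)\|_1,\ \|F(u)\|_2,\ \|\nabla F(u)\|_q$ ($q=2,p_0$) via the Gagliardo--Nirenberg inequalities \eqref{eq:3.9}--\eqref{eq:3.10}, arriving at $\|\Phi_1[u]\|_{X_1}\le C_0\|u\|_{X_1}^2+C_1\|g\|$ and the same choice $R=2C_1\|g\|$. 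The interpolation issue you flag as the main obstacle is precisely what the paper resolves in \eqref{eq:5.3}--\eqref{eq:5.5}.
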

The proof is based on the $L^{p}$-$L^{q}$ type estimates of the evolution operators 
$K_{1}(t) \ast$ and $Q(t) \ast$ (cf. Lemma \ref{Lem:3.1}, Propositions \ref{prop:4.2} and \ref{prop:4.4}). 

We next mention the stability of the time periodic solution $u_{per}$, derived in Theorem \ref{thm:2.1}. 
We rewrite the equations into the ones for the perturbation  
$\tilde{u}:= u-u_{per}$.
Thus,
we substitute $u=\tilde{u} + u_{per}$ into the equation \eqref{eq:1.1} and obtain 
\begin{equation} \label{eq:2.7}
\begin{split}
\partial_{t}^{2} \tilde{u} -\mu \Delta \tilde{u} - (\lambda + \mu) \nabla {\rm div} \tilde{u} -\nu \Delta \partial_{t} \tilde{u}=G(\tilde{u}),
\end{split}
\end{equation}
where 
$G(\tilde{u}):= F(\tilde{u}) + \nabla \tilde{u} \nabla^{2} u_{per} +\nabla u_{per} \nabla^{2} \tilde{u}$.
Under the initial condition 
\begin{equation} \label{eq:2.8}
\begin{split}
(\tilde{u}(0), \partial_{t} \tilde{u}(0)) = (f_{0}, f_{1}),
\end{split}
\end{equation}
we show the decay estimates of the solutions,
$\tilde{u}$, 
to the Cauchy problem \eqref{eq:2.7}-\eqref{eq:2.8}.  
Taking $p_{0}=\frac{5}{2}$ in Theorem \ref{thm:2.1},
we conclude the stability of the time periodic solution under the small perturbation, which is stated as follows.
\begin{thm} \label{thm:2.2}
Let $(f_{0}, f_{1}) \in Y_{1} := \{ \dot{W}^{3,\frac{5}{2}} \cap \dot{H}^{3} \cap \dot{W}^{1,1} \}^{3} \times \{ \dot{W}^{1,\frac{5}{2}} \cap H^{1} \cap L^{1} \}^{3}$.
Then there exist a constant $\varepsilon_{1}>0$ such that if
$$\| g \|_{L^{1}(0,T;\dot{W}^{1,\frac{5}{2}} \cap \dot{H}^{1} \cap L^{1}  )} + \| f_{0}, f_{1} \|_{Y_{1}} < \varepsilon_{1},$$
the global solution $\tilde{u}$ to the Cauchy problem \eqref{eq:2.7}-\eqref{eq:2.8} with \eqref{eq:1.2}
exists uniquely in the class 
$$
\{ C([0,\infty);\dot{W}^{3,\frac{5}{2}} \cap \dot{H}^{3} \cap \dot{H}^{1}) 
\cap C^{1}([0,\infty);W^{1, \frac{5}{2}} \cap H^{1}) \}^{3}
$$
and $\tilde{u}$ satisfies the following estimates:
\begin{align} 
			\| \nabla^{3} \tilde{u} (t) \|_{q} & 
			\le \varepsilon_{1} (1+t)^{-\frac{3}{2}(1-\frac{1}{q})+\frac{1}{q}-\frac{3}{2}}, \quad q=\frac{5}{2},2, \label{eq:2.9} \\
			\| \nabla \tilde{u} (t) \|_{q} & 
			\le \varepsilon_{1} (1+t)^{-\frac{3}{2}(1-\frac{1}{q})+\frac{1}{q}}, 
			\quad q=\infty,2,
			\label{eq:2.10} \\
			\| \nabla^{\alpha}  \partial_{t} \tilde{u} (t) \|_{q} & 
			\le \varepsilon_{1} (1+t)^{-\frac{3}{2}(1-\frac{1}{q})+\frac{1}{q}-\frac{\alpha+1}{2}}, \quad q=\frac{5}{2},2,\ 0 \le \alpha \le 1 \label{eq:2.11}
\end{align}
for $t \ge 0$.
\end{thm}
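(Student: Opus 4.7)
The plan is to apply Duhamel's formula to the perturbation equation \eqref{eq:2.7}--\eqref{eq:2.8}, work in a function space whose norm encodes exactly the time-weighted decay rates \eqref{eq:2.9}--\eqref{eq:2.11}, and close an a priori estimate by combining the sharp $L^{p}$-$L^{q}$ bounds on the semigroup $e^{t\mathcal{A}}$ with the regularity bounds \eqref{eq:1.14} on $u_{per}$. First I would write
\begin{equation*}
\tilde{{\bf u}}(t) = e^{t \mathcal{A}} \tilde{{\bf u}}(0) + \int_{0}^{t} e^{(t-s) \mathcal{A}}\, \tilde{{\bf G}}(\tilde{u})(s)\, ds,
\end{equation*}
with $\tilde{{\bf u}} = {}^{t}\!(\tilde{u},\partial_{t}\tilde{u})$, $\tilde{{\bf G}}(\tilde{u}) = {}^{t}\!(0, G(\tilde{u}))$ and $G(\tilde{u}) = F(\tilde{u}) + \nabla \tilde{u}\, \nabla^{2} u_{per} + \nabla u_{per}\, \nabla^{2} \tilde{u}$, and introduce $[\tilde{u}]_{T}$ as the supremum over $t \in [0,T]$ of the quantities on the left-hand sides of \eqref{eq:2.9}--\eqref{eq:2.11} divided by their declared time weights. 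Local existence of a solution with $[\tilde{u}]_{T} < \infty$ is standard in the semigroup framework; the heart of the argument is to prove an a priori inequality of the form $[\tilde{u}]_{T} \le C\varepsilon_{1} + C\|g\|\,[\tilde{u}]_{T} + C[\tilde{u}]_{T}^{2}$, uniform in $T$, which after absorbing the $\|g\|$-term on the left yields $[\tilde{u}]_{T} \lesssim \varepsilon_{1}$ and extends the solution to all of $[0,\infty)$.

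For the linear part $e^{t \mathcal{A}} \tilde{{\bf u}}(0)$, the low-frequency diffusion-wave estimates of the semigroup (from Section 3 together with \cite{K-T1}, \cite{K-T2}) give the rates $(1+t)^{-\frac{3}{2}(1-\frac{1}{q}) + \frac{1}{q} - \frac{\alpha}{2}}$ by spending the $L^{1}$ (respectively $\dot{W}^{1,1}$) content of the data $(f_{0},f_{1}) \in Y_{1}$, while the high-frequency part inherits the regularity of the data via parabolic smoothing of $e^{t\mathcal{A}}$; combined, these produce the free-evolution decay with constant bounded by $C\|(f_{0},f_{1})\|_{Y_{1}} \le C\varepsilon_{1}$. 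For the purely quadratic term $F(\tilde{u}) = \nabla \tilde{u}\, \nabla^{2} \tilde{u}$, Hölder's inequality together with interpolation of $\nabla\tilde{u}$ between \eqref{eq:2.10} at $q=\infty$ and $q=2$, and of $\nabla^{2}\tilde{u}$ between the $q=5/2$ and $q=2$ bounds obtained from \eqref{eq:2.9}--\eqref{eq:2.10}, yield $\|F(\tilde{u})(s)\|_{r} \lesssim [\tilde{u}]_{s}^{2} (1+s)^{-\gamma(r)}$ with $\gamma(r)$ large enough that the Duhamel convolution against the semigroup bounds is time-integrable and reproduces the target weights, giving the $[\tilde{u}]_{T}^{2}$ contribution in the a priori estimate.

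The main obstacle is the linearization term $N(\tilde{u},u_{per}) := \nabla \tilde{u}\, \nabla^{2} u_{per} + \nabla u_{per}\, \nabla^{2} \tilde{u}$, because it is only linear in $\tilde{u}$ while $u_{per}$ does not decay in time, so all of the temporal decay must be extracted from $\tilde{u}$ itself, and one must verify that the Lebesgue exponents imposed on $\nabla u_{per}$ and $\nabla^{2} u_{per}$ by the Hölder pairings lie inside the ranges permitted by \eqref{eq:1.14}. This is exactly where the choice $p_{0} = 5/2$ in Theorem \ref{thm:2.1} becomes essential: pairings such as
\begin{equation*}
\|\nabla \tilde{u}\, \nabla^{2} u_{per}\|_{1} \le \|\nabla \tilde{u}\|_{10}\, \|\nabla^{2} u_{per}\|_{10/9}, \qquad \|\nabla u_{per}\, \nabla^{2} \tilde{u}\|_{1} \le \|\nabla u_{per}\|_{5/3}\, \|\nabla^{2} \tilde{u}\|_{5/2}
\end{equation*}
place the $u_{per}$-factors at exponents $10/9$ and $5/3$ that sit in the $p<2$ regime explicitly secured by \eqref{eq:1.14}, while $\|\nabla \tilde{u}\|_{10}$ is produced by interpolating \eqref{eq:2.10} between $q=\infty$ and $q=2$; analogous $L^{5/2}$ and $L^{2}$ pairings handle the outputs required by the semigroup estimates at the other derivative orders. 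Since \eqref{eq:1.14} controls the $u_{per}$-factors by a constant multiple of $\|g\|$, the contribution of $N(\tilde{u},u_{per})$ to the Duhamel integral is bounded by $C\|g\|\,[\tilde{u}]_{T}$ times the target decay weight, furnishing the linear-in-$[\tilde{u}]_{T}$ term in the a priori inequality. Closing this inequality under the smallness assumption on $\|g\|$ and $\|(f_{0},f_{1})\|_{Y_{1}}$ and invoking standard continuation yields the unique global solution in the stated class together with the sharp decay estimates \eqref{eq:2.9}--\eqref{eq:2.11}.
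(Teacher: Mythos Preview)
Your proposal is correct and follows essentially the same route as the paper: Duhamel formulation of \eqref{eq:2.7}--\eqref{eq:2.8}, a time-weighted norm encoding the rates \eqref{eq:2.9}--\eqref{eq:2.11}, the linear and purely quadratic pieces handled via \cite{K-T1}, \cite{K-T2}, and the linearization terms $\nabla\tilde u\,\nabla^{2}u_{per}+\nabla u_{per}\,\nabla^{2}\tilde u$ controlled through the regularity bounds \eqref{eq:1.14} on $u_{per}$ together with exactly the H\"older pairings you wrote (e.g.\ $\|\nabla\tilde u\|_{10}\|\nabla^{2}u_{per}\|_{10/9}$ and $\|\nabla u_{per}\|_{5/3}\|\nabla^{2}\tilde u\|_{5/2}$), after a low/middle/high frequency splitting of $K_{1}$. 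The only cosmetic difference is that the paper closes via a contraction mapping on the global space $X_{2}$ rather than your a~priori estimate plus continuation, but the underlying estimates are identical.
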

The proof of Theorem \ref{thm:2.2} relies heavily on regularity estimates of the time periodic solution to \eqref{eq:1.1}.
Depending on the value of $p_{0}$, 
the regularity of the time periodic solution is classified in the following theorem.
\begin{thm} \label{thm:2.3}
Let $2 \le p_{0} < \infty$ and $u_{per}$ be the solution constructed in Theorem \ref{thm:2.1}. \\
{\rm (i)} If $p_{0} \neq 3$,
then $u_{per}$ belongs to the class 
$$
\left\{ 
C_{per}\left( \R; \left(\bigcup_{\frac{3}{2} < p \le \infty} \dot{W}^{1,p} \right) \cup \left( \bigcup_{1 < p \le p_{0 \ast}} \dot{W}^{2,p} \right)
\cup \left( \bigcup_{1 < p \le p_{0}} \dot{W}^{3,p} \right) \right)
\right\}^{3}
$$ 
and satisfies the estimates 
\begin{align}
\| \nabla u_{per}(t)  \|_{p} & \le C \varepsilon_{0}, \quad \frac{3}{2} < p \le \infty, \label{eq:2.12} \\
\| \nabla^{2} u_{per}(t)  \|_{p} & \le C \varepsilon_{0}, \quad 1 < p \le p_{0 \ast}, \label{eq:2.13} \\
\| \nabla^{3} u_{per}(t)  \|_{p} & \le C \varepsilon_{0}, \quad 1 < p \le p_{0}  \label{eq:2.14}
\end{align}
for $t \in [0,T]$, where $p_{0 \ast} := 
\begin{cases} 
& \dfrac{3 p_{0}}{3-p_{0}}, \qquad 2 \le p_{0} < 3, \\ 
& \infty,\qquad p_{0}>3. 
\end{cases}
$ \ \\
{\rm (ii)} If $p_{0} = 3$,
then $u_{per}$ belongs to the class 
$$
\left\{
C_{per}\left( \R; \left(\bigcup_{\frac{3}{2} < p \le \infty} \dot{W}^{1,p} \right) \cup \left( \bigcup_{1 < p< \infty} \dot{W}^{2,p} \right)
\cup \left( \bigcup_{1 < p \le p_{0}} \dot{W}^{3,p} \right) \right)
\right\}^{3}
$$ 
and satisfies the estimates \eqref{eq:2.12}, \eqref{eq:2.14} and 
\begin{align}
\| \nabla^{2} u_{per}(t)  \|_{p} & \le C \varepsilon_{0}, \quad 1 < p < \infty, \label{eq:2.15}
\end{align}
for $t \in [0,T]$.
\end{thm}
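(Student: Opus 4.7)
The plan is to argue by bootstrap from the integral representation \eqref{eq:2.5} satisfied by $u_{per}$. Theorem \ref{thm:2.1} already provides the baseline bounds $\nabla^{3}u_{per}\in L^{p_{0}}\cap L^{2}$, $\nabla u_{per}\in L^{2}$ and $\nabla\pt_{t}u_{per}\in L^{p_{0}}\cap L^{2}$ uniformly in $t\in[0,T]$. The strategy is to insert these bounds into the quadratic nonlinearity $F(u_{per})=\nabla u_{per}\,\nabla^{2}u_{per}$ via H\"older's inequality, and then feed the resulting source term back through the sharper $L^{p}$-$L^{q}$ estimates of the evolution operators $Q(t)\ast$ and $K_{1}(t)\ast$ (Lemma \ref{Lem:3.1}, Propositions \ref{prop:4.2} and \ref{prop:4.4}) to widen the range of admissible $p$.

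The first step is to obtain a family of bounds $\|F(u_{per})(s)\|_{r}\le C\varepsilon_{0}^{2}$ for $s\in[0,T]$ and for $r$ in as large a range as possible. Using Gagliardo-Nirenberg together with the baseline bounds and standard Sobolev embeddings, $\nabla u_{per}$ and $\nabla^{2}u_{per}$ can be controlled in a broad family of Lebesgue spaces; pairing them by H\"older places $F(u_{per})$ in $L^{1}\cap L^{p_{0}}\cap \dot{W}^{1,p_{0}}$ and related spaces. The external force $g$ already sits in $L^{1}_{per}(\R;\dot{W}^{1,p_{0}}\cap\dot{H}^{1}\cap L^{1})$ by hypothesis, so $F(u_{per})+g$ enjoys the same integrability up to a factor $O(\varepsilon_{0})$.

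The second step is to take $\nabla^{k}$ of \eqref{eq:2.5} for $k=1,2,3$ and split each kernel into low- and high-frequency pieces. For the high-frequency part the Riesz-transform boundedness (Lemma \ref{Lem:3.2}) together with the smoothing of $e^{t\mathcal{A}}$ produce genuine $L^{p}\to L^{p}$ bounds that cover the full range $1<p<\infty$; here Proposition \ref{prop:4.4}, built from the high-frequency analysis of \cite{K-T2}, is the key tool that makes the estimates available for $p<2$. For the low-frequency part, $\wh{Q}$ carries an essential $|\xi|^{-2}$ factor inherited from the inversion of $I_{6}-e^{T\hat{\mathcal{A}}_{\xi}}$ at $|\xi|=0$, so one must invoke Hardy-Littlewood-Sobolev (Lemma \ref{Lem:3.3}) to transfer integrability from the source to the output. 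The HLS exponent relation explains the ranges stated: the $\nabla$ estimate \eqref{eq:2.12} requires a gain of two derivatives, which by HLS forces $p>3/2$, while the upper thresholds $p_{0\ast}$ and $p_{0}$ in \eqref{eq:2.13}-\eqref{eq:2.14} are dictated by Sobolev embedding compatible with the integrability just obtained for $F(u_{per})$.

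The critical case $p_{0}=3$ in part (ii) is handled by iterating the bootstrap: once the bound on $\nabla^{3}u_{per}$ in $L^{p}$ is available for $1<p\le 3$, Sobolev embedding delivers $\nabla^{2}u_{per}\in L^{q}$ for every finite $q$, and the HLS step is then applied with an $\varepsilon$-loss to close the estimate for all $1<p<\infty$. Finally, time integration of the kernel estimates against the $L^{1}$-in-time bound on $F(u_{per})+g$ yields the pointwise-in-$t$ bounds \eqref{eq:2.12}-\eqref{eq:2.15}, and periodicity extends them from $[0,T]$ to all of $\R$. The main obstacle I anticipate is the low-frequency $\nabla^{2}$ estimate for $p<2$: the HLS argument here sits outside the natural duality range, so one must combine Proposition \ref{prop:4.4} with the quadratic structure of $F$, which supplies $F(u_{per})\in L^{1}$ from the $L^{2}$ baseline bounds, to cover this regime.
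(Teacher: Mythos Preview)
Your proposal is essentially correct and follows the same route as the paper. Two minor remarks: first, for the ranges $p\ge 2$ in \eqref{eq:2.12}--\eqref{eq:2.14} the paper does not revisit the integral equation at all---it simply interpolates the baseline $X_{1}$-bounds ($\nabla u_{per}\in L^{2}$, $\nabla^{3}u_{per}\in L^{2}\cap L^{p_{0}}$) together with \eqref{eq:3.9} and the Sobolev embedding $\dot{W}^{3,p_{0}}\hookrightarrow\dot{W}^{2,p_{0\ast}}$; only the sub-$L^{2}$ ranges genuinely require going back to \eqref{eq:2.5}. Second, your anticipated ``obstacle'' for the low-frequency $\nabla^{2}$ estimate at $p<2$ is not one: at order $\alpha+\ell=2$ the $|\xi|^{-2}$ singularity of $\hat{Q}_{L}$ is exactly cancelled, so Proposition~\ref{prop:4.2} (estimate \eqref{eq:4.2}) gives a direct $L^{1}\to L^{p}$ bound for all $1<p\le\infty$ with no HLS needed---the $L^{1}$ control of $F(u_{per})$ from \eqref{eq:5.3} suffices. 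The $p_{0}=3$ case likewise needs no iteration; the Sobolev embedding $\dot{W}^{3,p}\hookrightarrow\dot{W}^{2,3p/(3-p)}$ for $p<3$ already delivers every finite exponent once \eqref{eq:2.14} is in hand.
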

Especially,
the key observation is the estimates of $u_{per}$ in the Sobolev spaces $\dot{W}^{k,p}$ for $k=1,2,3$ and $p<2$.
They are useful to show the stability of the time periodic solution, when $p_{0}=\frac{5}{2}$.
%
%
%
%
%
\section{Preliminaries}
%
%
This section is devoted to collect the fundamental facts used throughout the paper.
We begin with the decay properties of the fundamental solutions of \eqref{eq:2.1}.
To do so,
we define the smooth cut-off functions $\chi_{l}= \chi_{l}(\xi) \in C^{\infty}(\R^{3})$ 
$(l=L,M,H)$ in the Fourier space as follows:
\begin{equation*}
\begin{split}
\chi_{L}:= 
\begin{cases}
& 1 \quad (|\xi|\le \frac{c_{0}}{2}), \\
& 0 \quad (|\xi|\ge c_{0}),
\end{cases}
\end{split}
\end{equation*}
\begin{equation*}
\begin{split}
\chi_{H}:= 
\begin{cases}
& 0 \quad (|\xi|\le c_{1}), \\
& 1 \quad (|\xi|\ge 2c_{1})
\end{cases}
\end{split}
\end{equation*}
and 
\begin{equation*}
\chi_{M}=1-\chi_{L}-\chi_{H}.
\end{equation*}
Here $c_{0}$ and $c_{1}$ $(0<c_{0}<c_{1}<\infty)$ are some constants to be determined later.
Then the localized fundamental solutions are defined by 
\begin{equation*} 
\begin{split}
\hat{K}_{k l}(t, \xi) & := \hat{K}_{k}(t, \xi) \chi_{l},
\end{split}
\end{equation*}
and
\begin{equation*} 
\begin{split}
\hat{K}_{k l}^{(j)}(t, \xi) & := \hat{K}_{k}^{(j)}(t, \xi) \chi_{l}
\end{split}
\end{equation*}
for $j=1,2$, $k=0,1$ and $l=L,M, H$. 
\begin{lem}[\cite{Ponce}, \cite{Shibata}, \cite{K-S}] \label{Lem:3.1}
	{\rm (i)} Let $1 \le q \le p \le \infty$ with $(p, q) \neq (1,1)$,\ $(\infty, \infty)$, 
	$\ell \ge \tilde{\ell} \ge 0$ and $\alpha \ge \tilde{\alpha} \ge 0$. 
	Then it holds that
	\begin{align} 
	& 
	\sum_{j=1}^{2} \left\|
	\partial_{t}^{\ell} 
	\nabla^{\alpha}
	K_{0L}^{(j)}(t) \ast g
	\right\|_{p} 
	\le C(1+t)^{-\frac{3}{2}(\frac{1}{q}-\frac{1}{p})-(\frac{1}{q}-\frac{1}{p})+\frac{1}{2} -\frac{\ell-\tilde{\ell}+ \alpha-\tilde{\alpha}}{2}}
	\| \nabla^{\tilde{\alpha}+\tilde{\ell}} g \|_{q}, \label{eq:3.1}  \\
	& \sum_{j=1}^{2} \left\| 
	\partial_{t}^{\ell} 
	\nabla^{\alpha}
	K_{1L}^{(j)}(t) \ast g
	\right\|_{p} 
	\le C(1+t)^{-\frac{3}{2}(\frac{1}{q}-\frac{1}{p})-(\frac{1}{q}-\frac{1}{p})+1 -\frac{\ell-\tilde{\ell}+ \alpha-\tilde{\alpha}}{2}}
	\| \nabla^{\tilde{\alpha}+\tilde{\ell}} g \|_{q} \label{eq:3.2} 
	\end{align}
	for $t \ge 0$.
	\\
	{\rm (ii)}
	Let $\alpha \ge \tilde{\alpha} \ge 0$, $\ell \ge 2 \tilde{\ell} \ge 0$ and $t>0$. 
	Then, the following estimates hold:
	\begin{equation}
		\begin{split}
			& \sum_{j=1}^{2} \| \partial_{t}^{\ell} \nabla^{\alpha} K_{0H}^{(j)}(t) \ast g  \|_{p}  \\ 
			& \le C e^{-ct} (\| \nabla^{\alpha_{1}} g \|_{p} +t^{-\frac{3}{2}(\frac{1}{q}-\frac{1}{p})-\frac{\alpha-\tilde{\alpha}}{2}-(\ell-\frac{\tilde{\ell}}{2})+1}
			\| \nabla^{\tilde{\alpha}+\tilde{\ell}} g \|_{q}), \quad \alpha_{1} \ge \alpha, \label{eq:3.3} 
		\end{split}
	\end{equation}
	\begin{equation}
		\begin{split}
			& \sum_{j=1}^{2} \| \partial_{t}^{\ell} \nabla^{\alpha} K_{1H}^{(j)}(t) \ast g \|_{p} \\
			& \le C e^{-ct} (\| \nabla^{\alpha_{1}} g \|_{p} 
			+t^{-\frac{3}{2}(\frac{1}{q}-\frac{1}{p})-\frac{\alpha-\tilde{\alpha}}{2}-(\ell-\frac{\tilde{\ell}}{2})+1}\| \nabla^{\tilde{\alpha}+\tilde{\ell}} g \|_{q}),
			\quad \alpha_{1} \ge \max\{0, \alpha-2 \}
			\label{eq:3.4}
		\end{split}
	\end{equation}
	for $1 < p< \infty$ and $1 \le q \le p$ 
	and
	\begin{equation} \label{eq:3.5}
		\begin{split}
		\sum_{j=1}^{2} \left( \| \partial_{t}^{\ell} \nabla^{\alpha} K_{0M}^{(j)}(t) \ast g \|_{p}
			+\| \partial_{t}^{\ell} \nabla^{\alpha} K_{1M}^{(j)}(t) \ast g \|_{p} \right)
			\le C e^{-ct} \| \nabla^{\tilde{\alpha}} g \|_{q} 
		\end{split}
	\end{equation}
	for $1 \le q \le p \le \infty$. 
\end{lem}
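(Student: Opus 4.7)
The plan is to analyze the Fourier multiplier $\hat{K}_k^{(j)}(t,\xi)$ separately in the three regimes cut out by $\chi_L$, $\chi_M$, $\chi_H$; the different asymptotic behavior of the characteristic roots $\sigma_{j,\pm}$ in each regime dictates a different argument. Throughout, set $c_1 := \lambda + 2\mu$ and $c_2 := \mu$, and use that $\sigma_{j,\pm}$ are the roots of $\sigma^2 + \nu|\xi|^2 \sigma + c_j|\xi|^2 = 0$.

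For part (i), on $\mathrm{supp}\,\chi_L$ the discriminant $\nu^2|\xi|^4 - 4c_j|\xi|^2$ is negative, so
$$\sigma_{j,\pm} = -\tfrac{\nu}{2}|\xi|^2 \pm i|\xi|\sqrt{c_j - \tfrac{\nu^2}{4}|\xi|^2}.$$
Plugging these into the formulas for $\hat{K}_0^{(j)}$, $\hat{K}_1^{(j)}$, each multiplier becomes $e^{-\frac{\nu}{2}|\xi|^2 t}$ times a smooth factor built from $\cos(|\xi|\sqrt{c_j-\cdots}\,t)$ and $\sin(|\xi|\sqrt{c_j-\cdots}\,t)/|\xi|$, i.e., a diffusion-wave symbol of the type discussed in the introduction. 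Differentiation in $t$ produces powers of $|\xi|^2$ (parabolic gain) and $|\xi|$ (wave gain), which account for the factors $-\frac{\ell-\tilde\ell + \alpha-\tilde\alpha}{2}$ together with the $+\tfrac{1}{2}$ in \eqref{eq:3.1} and the $+1$ in \eqref{eq:3.2}. The $L^q$-$L^p$ decay rate $(1+t)^{-\frac{3}{2}(\frac{1}{q}-\frac{1}{p}) - (\frac{1}{q}-\frac{1}{p})}$ then follows by splitting the multiplier into its Gaussian and oscillatory parts and applying the standard convolution and Hausdorff–Young estimates exactly as in the diffusion-wave analyses of [Shibata] and [K-S]; the additional loss $-(\frac{1}{q}-\frac{1}{p})$ comes from the wave factor.

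For the middle-frequency estimate \eqref{eq:3.5}, on $\mathrm{supp}\,\chi_M$ the real parts of $\sigma_{j,\pm}$ are uniformly bounded above by a negative constant, so $\hat{K}_k^{(j)}\chi_M$ is a smooth, compactly supported multiplier of size $e^{-ct}$; Young's inequality then yields \eqref{eq:3.5}. For the high-frequency part, for $|\xi|$ large the discriminant is positive and
$$\sigma_{j,+} = -\tfrac{c_j}{\nu} + O(|\xi|^{-2}), \qquad \sigma_{j,-} = -\nu|\xi|^2 + \tfrac{c_j}{\nu} + O(|\xi|^{-2}),$$
so one writes
$$\hat{K}_1^{(j)}\chi_H = \frac{e^{\sigma_{j,+}t} R_j \chi_H}{\sigma_{j,+}-\sigma_{j,-}} - \frac{e^{\sigma_{j,-}t} R_j \chi_H}{\sigma_{j,+}-\sigma_{j,-}},$$
and similarly for $\hat{K}_0^{(j)}$. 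The first term enjoys a uniform $e^{-ct}$ factor and, because $(\sigma_{j,+}-\sigma_{j,-})^{-1} \sim (\nu|\xi|^2)^{-1}$, corresponds to an operator of order $-2$; combined with the Riesz projector $R_j$ it is $L^p$-bounded for $1<p<\infty$ by the Mihlin multiplier theorem, which explains both the restriction $1<p<\infty$ and the threshold $\alpha_1 \ge \max\{0,\alpha-2\}$ in \eqref{eq:3.4}. The second term inherits the parabolic smoothing $e^{-c|\xi|^2 t}$, so it can absorb up to $\ell - \tilde\ell/2$ derivatives in $t$ and $\alpha - \tilde\alpha$ in $x$, producing the polynomial factor $t^{-\frac{3}{2}(\frac{1}{q}-\frac{1}{p}) - \frac{\alpha-\tilde\alpha}{2} - (\ell-\tilde\ell/2) + 1}$.

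The main obstacle will be the bookkeeping in the high-frequency regime: the distribution of powers of $|\xi|$ between $R_j$, the factor $(\sigma_{j,+}-\sigma_{j,-})^{-1}$, and the applied derivatives $\partial_t^\ell \nabla^\alpha$ determines which pieces are bounded multipliers and which must draw smoothing from $e^{\sigma_{j,-}t}$. In particular, verifying the Mihlin symbol estimates that justify the $\max\{0,\alpha-2\}$ threshold and the range $1<p<\infty$, and keeping track of time derivatives that bring down extra powers of $\sigma_{j,\pm}$, is where care is required; once the decomposition above is in place, the computations follow along the lines of [Ponce], [K-T1] and [K-T2].
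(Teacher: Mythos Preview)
The paper does not supply its own proof of this lemma: it is stated with attributions to \cite{Ponce}, \cite{Shibata}, \cite{K-S} and then the text moves on immediately to the other preliminary inequalities. So there is no ``paper's proof'' to compare against; the authors are simply quoting a known result.

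Your sketch is correct and is precisely the route taken in those references (and in \cite{K-T1}, \cite{K-T2}, which the paper also invokes for related estimates). The three-regime decomposition, the diffusion-wave analysis for $\chi_L$ yielding the $(1+t)^{-\frac{3}{2}(\frac1q-\frac1p)-(\frac1q-\frac1p)}$ rate with the extra $+\frac12$ or $+1$ from the wave factor, the trivial Young argument on $\mathrm{supp}\,\chi_M$, and the high-frequency splitting into an order $-2$ Mihlin multiplier (hence the constraint $1<p<\infty$ and the threshold $\alpha_1\ge\max\{0,\alpha-2\}$) plus a heat-type piece absorbing the remaining derivatives --- all of this matches the standard treatment. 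There is nothing to correct; if you wish to include a proof you can safely follow this outline, or simply cite the references as the paper does.
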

The remainder part of this section summarizes useful inequalities.
The following lemma provides estimate for the Riesz transform:  
\begin{lem}  \label{Lem:3.2}
	Let $1<p<\infty$. 
	There exists $C>0$ such that 
	\begin{equation} \label{eq:3.6}
		\begin{split}
			\|\mathcal{F}^{-1}[R_{j} g]\|_{p} \le C \| g \|_{p}
		\end{split}
	\end{equation}
	for $j=1,2$.
\end{lem}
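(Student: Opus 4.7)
The plan is to reduce the statement to the classical $L^p$ boundedness of the Riesz transform. I would first observe that the matrix $R_1=\frac{\xi}{|\xi|}\otimes\frac{\xi}{|\xi|}$ has entries $\frac{\xi_j\xi_k}{|\xi|^2}$ for $j,k=1,2,3$, and that the operator with Fourier symbol $\frac{\xi_j\xi_k}{|\xi|^2}$ is exactly the composition $\mathcal{R}_j\mathcal{R}_k$ of the standard Riesz transforms defined by $\widehat{\mathcal{R}_j f}(\xi)=-i\frac{\xi_j}{|\xi|}\hat{f}(\xi)$. Since the symbol $\xi_j/|\xi|$ is a classical example of a Calder\'on--Zygmund multiplier, Stein's theorem gives that each $\mathcal{R}_j$ is bounded on $L^p(\mathbb{R}^3)$ for every $1<p<\infty$.

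From there, I would argue that entrywise the operator $\mathcal{F}^{-1}[R_1 g]$ is a finite linear combination of compositions $\mathcal{R}_j\mathcal{R}_k g_\ell$ applied to the components of $g$. Composing two bounded operators on $L^p$ yields a bounded operator on $L^p$, so $\|\mathcal{F}^{-1}[R_1 g]\|_p\le C\|g\|_p$ follows immediately with a constant depending only on $p$ and the (finitely many) entries. The case $j=2$ is then handled by writing $R_2=I_3-R_1$ and using the triangle inequality, since the identity is trivially bounded on $L^p$.

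I expect no genuine obstacle here: the statement is a direct consequence of the classical Calder\'on--Zygmund theory, so the real task is presentational rather than technical. The only mild subtlety to flag is the vector-valued nature of $g$, but since the estimate is applied componentwise and the Riesz transforms act scalarly on each component, the passage from the scalar bound to the vector-valued one is routine.
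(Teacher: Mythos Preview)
Your proposal is correct and follows exactly the classical route the paper has in mind: the paper simply cites Grafakos for \eqref{eq:3.6}, and your reduction of the matrix multiplier $R_1=\frac{\xi}{|\xi|}\otimes\frac{\xi}{|\xi|}$ to compositions $\mathcal{R}_j\mathcal{R}_k$ of scalar Riesz transforms, together with $R_2=I_3-R_1$, is precisely the standard argument behind that citation. There is nothing to add.
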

	For the proof of \eqref{eq:3.6}, see e.g. \cite{Gr}.

The following lemmas are the Hardy-Littlewood-Sobolev inequality and the Sobolev inequality which are useful to show the regularity estimates of the time periodic solutions to \eqref{eq:1.1}.
\begin{lem} \label{Lem:3.3}
	Let $\frac{3}{2} < p< \infty$.
	There exists a constant $C>0$ such that 
	\begin{align} 
			\left\| 
			\mathcal{F}^{-1}[ |\xi|^{-1} \hat{g} ] 
			\right\|_{L^{p}(\R^{3})} 
			\le C \| g \|_{
			L^{\frac{3p}{p+3} }(\R^{3})
			}, \label{eq:3.7} 
	\end{align}
	where $C$ is independent of $g$.
\end{lem}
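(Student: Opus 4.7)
The plan is to identify the operator $g \mapsto \F^{-1}[|\xi|^{-1}\hat g]$ as a constant multiple of the Riesz potential $I_{1}$ on $\R^{3}$ and then invoke the classical Hardy--Littlewood--Sobolev fractional integration theorem.

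First, I would use the standard Fourier identity
$$
\F^{-1}[|\xi|^{-\alpha} \hat g](x) = c_{n,\alpha} \int_{\R^{n}} \frac{g(y)}{|x-y|^{n-\alpha}}\, dy \qquad (0<\alpha<n),
$$
valid (in the sense of tempered distributions) by Gaussian subordination $|\xi|^{-\alpha} = \Gamma(\alpha/2)^{-1}\int_{0}^{\infty} t^{\alpha/2-1}e^{-t|\xi|^{2}}\,dt$ combined with the explicit Fourier transform of a Gaussian and Fubini. Specializing to $n=3$ and $\alpha=1$ gives
$$
\F^{-1}[|\xi|^{-1}\hat g](x) = c \int_{\R^{3}} \frac{g(y)}{|x-y|^{2}}\, dy = c\,(I_{1} g)(x).
$$

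Next, I would apply the Hardy--Littlewood--Sobolev inequality: for $0<\alpha<n$, the Riesz potential $I_{\alpha}$ is bounded from $L^{q}(\R^{n})$ into $L^{p}(\R^{n})$ whenever $1<q<p<\infty$ and $\tfrac{1}{q}-\tfrac{1}{p}=\tfrac{\alpha}{n}$. With $n=3$ and $\alpha=1$ the scaling relation forces $q=\tfrac{3p}{p+3}$, and the constraint $q>1$ is equivalent to $p>\tfrac{3}{2}$, exactly the hypothesis of the lemma; the bound $p<\infty$ in turn supplies $q<3$. Combining this boundedness with the pointwise identity above yields \eqref{eq:3.7}.

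The main obstacle, if one were to prove HLS from scratch rather than cite it, is the weak-type $(q,p)$ bound for $I_{\alpha}$. The standard route splits $I_{\alpha} g(x) = \int_{|x-y|\le R}\frac{g(y)}{|x-y|^{n-\alpha}}dy + \int_{|x-y|> R}\frac{g(y)}{|x-y|^{n-\alpha}}dy$: the near part is controlled by a constant times $R^{\alpha} Mg(x)$ (where $M$ is the Hardy--Littlewood maximal operator), and the far part by $R^{\alpha - n/q'}\|g\|_{q}$ via H\"older's inequality; optimizing $R$ in terms of $Mg(x)$ and $\|g\|_{q}$ gives the weak-type $(q,p)$ estimate, after which Marcinkiewicz interpolation closes the argument. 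Since HLS is a classical result (see, e.g., \cite{Gr} already referenced in the paper), a direct citation suffices in practice.
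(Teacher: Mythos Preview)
Your proposal is correct and matches the paper's treatment: the paper does not prove this lemma at all but simply cites it as the classical Hardy--Littlewood--Sobolev inequality, referring to \cite{Stein1}. Your identification of $\mathcal{F}^{-1}[|\xi|^{-1}\hat g]$ with the Riesz potential $I_{1}$ in $\R^{3}$ and invocation of HLS is exactly the intended content, and your remark that a direct citation suffices is precisely what the authors do.
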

\begin{lem} \label{Lem:3.4}
	Let $1 < p< 3$.
	There exists a constant $C>0$ such that 
	\begin{align} 
			\left\| 
			g
			\right\|_{L^{\frac{3p}{3-p}}(\R^{3})} 
			\le C \| \nabla g \|_{
			L^{p }(\R^{3})
			}, \label{eq:3.8} 
	\end{align}
	where $C$ is independent of $g$.
\end{lem}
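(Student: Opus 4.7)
The plan is to derive Lemma \ref{Lem:3.4} from the Hardy--Littlewood--Sobolev inequality (Lemma \ref{Lem:3.3}) combined with the $L^{p}$-boundedness of scalar Riesz-type Fourier multipliers. By density of $C_{0}^{\infty}(\R^{3})$ in the homogeneous Sobolev space $\dot{W}^{1,p}(\R^{3})$, it suffices to argue for $g \in C_{0}^{\infty}(\R^{3})$.

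First, I would use the elementary Fourier-side identity $|\xi|^{2} \wh{g}(\xi) = \sum_{j=1}^{3} \xi_{j}^{2} \wh{g}(\xi) = \sum_{j=1}^{3} (-i\xi_{j}) \wh{\partial_{j} g}(\xi)$, which rearranges to $\wh{g}(\xi) = |\xi|^{-1} \sum_{j=1}^{3} \frac{-i\xi_{j}}{|\xi|} \wh{\partial_{j} g}(\xi)$. Taking the inverse Fourier transform produces the representation $g = \sum_{j=1}^{3} \F^{-1}[|\xi|^{-1} \wh{h_{j}}]$, where each $h_{j} := \F^{-1}\bigl[\tfrac{-i\xi_{j}}{|\xi|}\, \wh{\partial_{j} g}\bigr]$ is (up to a constant) the $j$-th scalar Riesz transform of $\partial_{j} g$.

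Next, I would apply Lemma \ref{Lem:3.3} with target exponent $q := \frac{3p}{3-p}$. Its hypothesis $q > \frac{3}{2}$ reduces to $p > 1$, which holds since $1 < p < 3$; the resulting source exponent computes to $\frac{3q}{q+3} = p$. This yields $\|g\|_{L^{3p/(3-p)}} \le C \sum_{j=1}^{3} \|h_{j}\|_{L^{p}}$. Then I would invoke the classical $L^{p}$-boundedness of the scalar Riesz multipliers $\xi_{j}/|\xi|$ (valid for $1 < p < \infty$, and of the same Mikhlin--H\"ormander character as the matrix multipliers in Lemma \ref{Lem:3.2}---indeed the entries of $R_{1}$ are products $\xi_{j}\xi_{k}/|\xi|^{2}$ of such Riesz factors) to conclude $\|h_{j}\|_{L^{p}} \le C \|\partial_{j} g\|_{L^{p}}$, and summing over $j$ delivers the stated inequality.

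The argument is entirely routine and no serious obstacle appears: the range $1 < p < 3$ lies strictly inside both the admissible range of Lemma \ref{Lem:3.3} (which forces $p > 1$) and that of the Riesz transform bounds (which need $1 < p < \infty$). The only conceptual subtlety worth flagging is the failure of the Riesz transform estimates at the endpoints $p \in \{1, \infty\}$, which is precisely the reason the stated inequality cannot be extended to $p = 1$.
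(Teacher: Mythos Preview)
Your argument is correct: the representation $g = \sum_{j} \mathcal{F}^{-1}[|\xi|^{-1}\,\widehat{h_{j}}]$ with $h_{j}$ the $j$-th Riesz transform of $\partial_{j} g$, followed by Lemma~\ref{Lem:3.3} (with $q=\tfrac{3p}{3-p}$, so that $\tfrac{3q}{q+3}=p$) and the $L^{p}$-boundedness of the scalar Riesz transforms, yields \eqref{eq:3.8} exactly as you describe, and the range checks are accurate.

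For comparison, the paper does not supply its own proof of Lemma~\ref{Lem:3.4}; it simply refers to Stein's monograph for both Lemmas~\ref{Lem:3.3} and~\ref{Lem:3.4}. Your derivation is in fact one of the standard routes to the Sobolev inequality presented in that reference (Sobolev via Riesz potentials and the Hardy--Littlewood--Sobolev inequality), so while you have written out more than the paper does, what you have written is entirely in the spirit of the cited source and stays within the tools already recorded in Section~3.
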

For the proof of lemmas \ref{Lem:3.3}-\ref{Lem:3.4}, see e.g. \cite{Stein1}.

Finally, we recall well-known embedding results.
\begin{lem} \label{Lem:3.5}
	There exists a constant $C>0$ such that 
	\begin{align}
			& \| g \|_{L^{\infty}(\R^{3})} \le C \| g \|_{L^{2}(\R^{3})}^{\frac{1}{4}} \| \nabla^{2} g \|_{L^{2}(\R^{3})}^{\frac{3}{4}}, \label{eq:3.9} \\
			& \| \nabla g \|_{L^{2p}(\R^{3})} \le C \| g \|_{L^{\infty}(\R^{3})}^{\frac{1}{2}} \| \nabla^{2} g \|_{L^{p}(\R^{3})}^{\frac{1}{2}}, \quad 1\le p< \infty, \label{eq:3.10}
	\end{align}
	where $C$ is independent of $g$.
\end{lem}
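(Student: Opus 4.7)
Both inequalities are classical Gagliardo--Nirenberg-type interpolation inequalities on $\R^{3}$, and the plan is to derive each by elementary means without appealing to the earlier material of the paper. For \eqref{eq:3.9} I would proceed via a Fourier decomposition: by the Fourier inversion formula together with the Hausdorff--Young inequality, $\|g\|_{\infty} \le C \|\wh{g}\|_{1}$. Splitting $\|\wh{g}\|_{1}$ into the integrals over $\{|\xi| \le R\}$ and $\{|\xi| > R\}$ and applying the Cauchy--Schwarz inequality in each, with the Plancherel identity $\|\wh{g}\|_{2} = \|g\|_{2}$ controlling the low-frequency part and the weight $|\xi|^{-2}$ providing integrability on the high-frequency part, one obtains
\begin{equation*}
\|\wh{g}\|_{1} \le C R^{3/2} \|g\|_{2} + C R^{-1/2} \|\nabla^{2} g\|_{2}.
\end{equation*}
Choosing $R = (\|\nabla^{2} g\|_{2}/\|g\|_{2})^{1/2}$ equilibrates the two contributions and produces \eqref{eq:3.9}. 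An equivalent route is to apply the Sobolev embedding $H^{2}(\R^{3}) \hookrightarrow L^{\infty}(\R^{3})$ to the rescaled function $x \mapsto g(\lambda x)$ and optimize in $\lambda > 0$.

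For \eqref{eq:3.10} the plan is an integration-by-parts argument combined with H\"older's inequality. Writing $|\nabla g|^{2p}$ as $(|\nabla g|^{2p-2} \nabla g) \cdot \nabla g$ and integrating by parts against the scalar factor $g$,
\begin{equation*}
\int_{\R^{3}} |\nabla g|^{2p}\, dx = - \int_{\R^{3}} g\, \nabla \cdot (|\nabla g|^{2p-2} \nabla g)\, dx.
\end{equation*}
A direct computation yields the pointwise bound $|\nabla \cdot (|\nabla g|^{2p-2} \nabla g)| \le C_{p} |\nabla g|^{2p-2} |\nabla^{2} g|$. Estimating $g$ in $L^{\infty}$ and applying H\"older's inequality with exponents $p$ and $p/(p-1)$ to the remaining factors yields
\begin{equation*}
\|\nabla g\|_{2p}^{2p} \le C \|g\|_{\infty}\, \|\nabla^{2} g\|_{p}\, \|\nabla g\|_{2p}^{2p-2},
\end{equation*}
from which \eqref{eq:3.10} follows upon dividing by $\|\nabla g\|_{2p}^{2p-2}$. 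The endpoint $p=1$ is treated separately and reduces immediately to the identity $\int_{\R^{3}} |\nabla g|^{2}\, dx = -\int_{\R^{3}} g\, \Delta g\, dx$, bounded directly by $\|g\|_{\infty} \|\nabla^{2} g\|_{1}$.

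The only delicate point is the divergence computation for general $p \in (1,\infty)$: one must differentiate the vector field $|\nabla g|^{2p-2} \nabla g$ as a whole, so that the potentially singular factor $|\nabla g|^{2p-4}$, arising when $p<2$, cancels against the quadratic vanishing of the bilinear form $(\nabla g)^{\top}(\nabla^{2} g)\nabla g$ and never appears in isolation. Beyond this bookkeeping, no genuine obstacle is anticipated: both inequalities are standard, and since the reference \cite{Stein1} cited for Lemmas \ref{Lem:3.3}--\ref{Lem:3.4} also covers these Gagliardo--Nirenberg bounds, one could equally well invoke them without a self-contained proof.
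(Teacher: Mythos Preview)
Your arguments for both \eqref{eq:3.9} and \eqref{eq:3.10} are correct and standard: the Fourier splitting (or, equivalently, the scaling of the $H^{2}\hookrightarrow L^{\infty}$ embedding) gives \eqref{eq:3.9}, and the integration-by-parts computation followed by H\"older yields \eqref{eq:3.10}, with the usual caveat for $1<p<2$ that one regularizes $|\nabla g|^{2}$ by $|\nabla g|^{2}+\epsilon$ to make the divergence computation rigorous before passing to the limit.

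By way of comparison, the paper does not prove Lemma~\ref{Lem:3.5} at all: it simply records that the estimates \eqref{eq:3.9}--\eqref{eq:3.10} are proved in \cite{C} (Cazenave's monograph on semilinear Schr\"odinger equations), without further argument. So your write-up is strictly more detailed than what appears in the paper. One small correction to your closing remark: the reference the paper invokes for this lemma is \cite{C}, not \cite{Stein1}; the latter is cited only for Lemmas~\ref{Lem:3.3}--\ref{Lem:3.4}.
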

	The estimate \eqref{eq:3.9}-\eqref{eq:3.10} are proved in \cite{C}.
\section{Estimates for $Q(t) \ast$}
This section is devoted to the estimates for the operation of $Q(t) \ast$.
In what follows, we denote $\alpha_{1} := \sqrt{\lambda+ 2 \mu}$ and $\alpha_{2} := \sqrt{\mu}$ for the simplicity of the notation.
We also introduce the frequency decomposition of $Q(t) \ast$ by 
\begin{equation*} 
\begin{split}
\hat{Q}_{l}(t, \xi) & :=\hat{Q}_{l}^{(1)}(t, \xi) +\hat{Q}_{l}^{(2)}(t, \xi), \\
\hat{Q}_{l}^{(j)}(t, \xi) & := \hat{Q}^{(j)}(t, \xi) \chi_{l}
\end{split}
\end{equation*}
for $j=1,2$ and $l=L,M, H$. 
\subsection{Low frequency parts}
We begin with the asymptotic behavior of the characteristic roots when $|\xi|$ is small.
\begin{lem} \label{Lem:4.1}
Let $j=1,2$. Then it holds that 
\begin{equation*}
\begin{split}
& \sigma_{j,\pm} = \pm i \alpha_{j} |\xi| +O(|\xi|^{2}), \\ 
& \dfrac{1}{
\sigma_{j,+} -\sigma_{j,-}
} = \dfrac{1}{2 i \alpha_{j} |\xi| } +O(|\xi|^{2}), \\
& \dfrac{1}{1-e^{\sigma_{j,\pm} T}} = \mp \dfrac{1}{i \alpha_{j} |\xi| T} + O(|\xi|)
\end{split}
\end{equation*}
as $|\xi| \to 0$.
\end{lem}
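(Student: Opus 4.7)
The plan is to perform Taylor expansions at $|\xi|=0$ of the three explicit functions of $|\xi|$ that appear, using only elementary analytic-function expansions. Throughout I will work on the region $|\xi|<2\alpha_j/\nu$, where the discriminant $\nu^2|\xi|^4-4\alpha_j^2|\xi|^2$ is strictly negative and the square roots defining $\sigma_{j,\pm}$ are purely imaginary; the constant $c_0$ appearing in the cut-off $\chi_L$ will be required to satisfy $c_0<2\min\{\alpha_1,\alpha_2\}/\nu$ so that all computations take place in this regime.

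\emph{Step 1 (expansion of $\sigma_{j,\pm}$).} I factor the discriminant as
\begin{equation*}
\nu^{2}|\xi|^{4}-4\alpha_j^{2}|\xi|^{2}=-4\alpha_j^{2}|\xi|^{2}\Bigl(1-\tfrac{\nu^{2}|\xi|^{2}}{4\alpha_j^{2}}\Bigr),
\end{equation*}
so that $\sqrt{\nu^{2}|\xi|^{4}-4\alpha_j^{2}|\xi|^{2}}=2i\alpha_j|\xi|\sqrt{1-\tfrac{\nu^{2}|\xi|^{2}}{4\alpha_j^{2}}}$. Then I apply $\sqrt{1-y}=1-y/2+O(y^{2})$ with $y=\nu^{2}|\xi|^{2}/(4\alpha_j^{2})$, insert the result in the defining formula of $\sigma_{j,\pm}$, and collect the leading $\pm i\alpha_j|\xi|$ piece together with the $-\nu|\xi|^{2}/2$ contribution coming from $-\nu|\xi|^{2}$ in the numerator; this yields the first assertion.

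\emph{Step 2 (reciprocal of the gap).} From Step 1 I have $\sigma_{j,+}-\sigma_{j,-}=2i\alpha_j|\xi|\sqrt{1-\nu^{2}|\xi|^{2}/(4\alpha_j^{2})}$ exactly. I then apply the binomial expansion $(1-y)^{-1/2}=1+y/2+O(y^{2})$, again with $y=\nu^{2}|\xi|^{2}/(4\alpha_j^{2})$, and multiply by $(2i\alpha_j|\xi|)^{-1}$ to obtain the second claim.

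\emph{Step 3 (reciprocal of $1-e^{\sigma_{j,\pm}T}$).} I use the classical Bernoulli-type expansion
\begin{equation*}
\frac{1}{1-e^{x}}=-\frac{1}{x}+\frac{1}{2}-\frac{x}{12}+O(x^{3})\qquad (x\to 0),
\end{equation*}
specialised to $x=\sigma_{j,\pm}T$, which is permissible because Step 1 gives $\sigma_{j,\pm}T=O(|\xi|)$ as $|\xi|\to 0$. The dominant term is $-1/(\sigma_{j,\pm}T)$, which I then expand via Step 1 using $(\pm i\alpha_j|\xi|T+O(|\xi|^{2}))^{-1}=\mp(i\alpha_j|\xi|T)^{-1}(1+O(|\xi|))$. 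Combining the dominant pole $\mp 1/(i\alpha_j|\xi|T)$ with the lower-order contributions (which are all bounded as $|\xi|\to 0$) produces the third asymptotic.

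The whole argument is a chain of Taylor expansions, so there is no substantial obstacle; the only point requiring care is \emph{bookkeeping the orders} when composing the expansions—in particular, an $O(|\xi|^{k})$ perturbation of a quantity of size $|\xi|$ produces an $O(|\xi|^{k-1})$ perturbation of its reciprocal, which must be tracked both in Step 2 and, more delicately, inside Step 3 when passing from $\sigma_{j,\pm}T$ to $1/(\sigma_{j,\pm}T)$.
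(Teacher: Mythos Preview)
Your approach is correct and is precisely the ``direct computation by using the formulas of $\sigma_{j,\pm}$'' that the paper invokes without further detail; your three-step Taylor expansion is essentially a fleshed-out version of the paper's one-line proof. As a side remark, the careful bookkeeping you flag will in fact produce remainders $O(|\xi|)$ in the second line and $O(1)$ in the third rather than the slightly sharper orders printed in the statement, but only these weaker bounds are actually used downstream in Proposition~\ref{prop:4.2}.
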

Lemma \ref{Lem:4.1} follows from a direct computation by using the formulas of 
$\sigma_{j,\pm}$.

Based on Lemma \ref{Lem:4.1}, 
we establish the estimates of $Q_{L}(t) \ast$, the low frequency parts of $Q(t) \ast$, in the $L^{p}$-Sobolev spaces. 
\begin{prop} \label{prop:4.2}
Let $j=1$, $2$, $\ell \ge 0$, $\alpha \ge 0$ and $0 \le t \le T$. 
Then there exists a constant $C=C_{T}>0$ such that
\begin{equation} \label{eq:4.1}
\begin{split}
\sum_{j=1}^{2} \int_{0}^{T}
\| \nabla^{\alpha} \partial_{t}^{\ell} Q^{(j)}_{L}(t-s) \ast g (s) \|_{p} ds 
\le C \int_{0}^{T} \| g(s) \|_{1}^{\frac{2}{p}-\frac{1}{3}} \| g(s) \|_{2}^{\frac{4}{3}-\frac{2}{p}}ds
\end{split}
\end{equation}
for $\ell+ \alpha=1$ with $\dfrac{3}{2} < p \le 6$
and 
\begin{equation} \label{eq:4.2}
\begin{split}
\sum_{j=1}^{2}
\int_{0}^{T}
\| \nabla^{\alpha} \partial_{t}^{\ell} Q^{(j)}_{L}(t-s) \ast g  \|_{p}ds  \le C \| g \|_{L^{1}(0,T;L^{1})}
\end{split}
\end{equation}
for $\ell+ \alpha=1$ with $2 \le p \le \infty$, 
for $\ell+ \alpha=2$ with $1< p \le \infty$ and for $\ell+ \alpha \ge 3$ with $1 \le p \le \infty$.
\end{prop}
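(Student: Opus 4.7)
The plan is to analyze the Fourier multiplier of $\nabla^\alpha \partial_t^\ell Q^{(j)}_L(t-s)$ via Lemma~\ref{Lem:4.1} and apply appropriate harmonic-analytic tools according to the order $\ell+\alpha$. Substituting the expansions of $\sigma_{j,\pm}$, $(\sigma_{j,+}-\sigma_{j,-})^{-1}$, and $(1-e^{\sigma_{j,\pm}T})^{-1}$ into the definition of $\hat Q^{(j)}$ yields
\[
\hat{Q}^{(j)}(t-s,\xi) = |\xi|^{-2} R_j\, \Psi_j(t-s,\xi) + O(1) \quad \text{as } |\xi|\to 0,
\]
with $\Psi_j$ bounded and continuous in $\xi$, uniformly in $t-s\in[-T,T]$ with a constant $C_T$ depending on $T$ (coming from the $(1-e^{\sigma_{j,\pm}T})^{-1}$ factor). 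Consequently $\nabla^\alpha \partial_t^\ell \hat{Q}^{(j)}_L(t-s,\xi)$ has magnitude $|\xi|^{\ell+\alpha-2}\chi_L(\xi)$ times bounded factors involving $R_j$ and smooth functions.

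For the estimate \eqref{eq:4.1} ($\ell+\alpha=1$, $\tfrac{3}{2}<p\le 6$), the multiplier carries a $|\xi|^{-1}$ Riesz-potential singularity. I would combine the Hardy--Littlewood--Sobolev inequality (Lemma~\ref{Lem:3.3}), which bounds the operator with multiplier $|\xi|^{-1}$ from $L^q$ to $L^p$ with $\tfrac{1}{q}=\tfrac{1}{p}+\tfrac{1}{3}$, with the $L^q$-boundedness of the Riesz transform (Lemma~\ref{Lem:3.2}) for $1<q<\infty$, and Young's inequality for the residual smooth compactly supported factor whose inverse Fourier transform is Schwartz. The right-hand side of \eqref{eq:4.1} then follows from the log-convexity interpolation $\|g\|_q\le\|g\|_1^{1-\theta}\|g\|_2^\theta$ with $\theta=\tfrac{4}{3}-\tfrac{2}{p}$ and $1-\theta=\tfrac{2}{p}-\tfrac{1}{3}$; the admissible range $1\le q\le 2$ matches $\tfrac{3}{2}\le p\le 6$.

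For \eqref{eq:4.2}, the plan is to apply Young's inequality $\|f*g\|_p\le\|f\|_p\|g\|_1$ with $f:=\mathcal{F}^{-1}[\nabla^\alpha \partial_t^\ell \hat{Q}^{(j)}_L(t-s,\cdot)]$ and verify $\|f\|_p\le C_T$ uniformly in $t-s\in[-T,T]$, working case by case. When $\ell+\alpha=1$ and $p\in[2,\infty]$, the symbol behaves like $|\xi|^{-1}\chi_L$ up to bounded factors; in three dimensions $\int_{|\xi|\le c_0}|\xi|^{-a}\,d\xi<\infty$ for $a\in\{1,2\}$, so the symbol lies in $L^1_\xi\cap L^2_\xi$, and consequently $f\in L^\infty\cap L^2$ by Hausdorff--Young and Parseval, whence $f\in L^p$ for $p\in[2,\infty]$ by log-convexity of $L^p$ norms. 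When $\ell+\alpha=2$ and $p\in(1,\infty)$, I would factor the symbol as $R_j\cdot M(\xi)$ with $M$ smooth and compactly supported, so that the operator becomes $g\mapsto R_j(\mathcal{F}^{-1}[M]*g)$, to which Lemma~\ref{Lem:3.2} and Young's inequality apply; the endpoint $p=\infty$ follows from $R_j M\in L^1_\xi$. When $\ell+\alpha\ge 3$, the extra factor $|\xi|^{\ell+\alpha-2}$ cancels the Riesz singularity to produce a continuous (smooth if $\ell+\alpha\ge 4$) compactly supported multiplier whose inverse Fourier transform lies in $L^1\cap L^\infty$, so Young's inequality gives the estimate for all $p\in[1,\infty]$. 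Integrating in $s$ over $[0,T]$ then yields \eqref{eq:4.1} and \eqref{eq:4.2}.

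The hard part will be the careful verification of $L^1$-integrability of the kernel at the borderline $\ell+\alpha=3$, $p=1$, where the symbol $|\xi|R_j\chi_L\Psi(\xi)$ is only Lipschitz (not smooth) at $\xi=0$, so one must extract decay of $f$ at infinity from the H\"older regularity of the symbol rather than from its smoothness; and in keeping track of the exact interpolation exponents in \eqref{eq:4.1} so that they match the stated bilinear $L^1\cap L^2$ norm.
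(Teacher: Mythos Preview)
Your proposal is correct and follows essentially the same route as the paper: expand $\hat{Q}^{(j)}_L$ via Lemma~\ref{Lem:4.1}, use Lemma~\ref{Lem:3.3} together with Lemma~\ref{Lem:3.2} and the interpolation $\|g\|_{3p/(p+3)}\le\|g\|_1^{2/p-1/3}\|g\|_2^{4/3-2/p}$ for the $\ell+\alpha=1$ case of \eqref{eq:4.1}, treat the $p\ge2$ case of $\ell+\alpha=1$ by the integrability of $|\xi|^{-1}\chi_L$ in $L^1\cap L^2$, and handle $\ell+\alpha=2$ by factoring out the Riesz transform. The paper dismisses the remaining case $\ell+\alpha\ge3$ as ``more easier'' without details; your caution about the borderline $\ell+\alpha=3$, $p=1$ is well placed, and can be resolved by noting that the symbol (of order $|\xi|$ near the origin, compactly supported, smooth away from $0$) lies in $H^2(\mathbb{R}^3)$, which is enough to put the kernel in $L^1$.
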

\begin{proof}
We only show the case $j=1$, since the same method also works for the case of $j=2$.  
Observing that 
\begin{equation} \label{eq:4.3}
\begin{split}
\partial_{t}^{\ell} \hat{Q}^{(1)}_{L}(t-s, \xi) = 
\frac{1}{ 2 \alpha_{1}^{2} |\xi|^{2} T } \left\{  
(i \alpha_{1} |\xi|)^{\ell} +(-i \alpha_{1} |\xi|)^{\ell}
\right\} R_{1} \chi_{L}
+ O(|\xi|^{\ell-1})
\end{split}
\end{equation}
by Lemma \ref{Lem:4.1} and $e^{\sigma_{j,\pm} (t+T-s)} =1+O(|\xi|)$ as $|\xi| \to 0$.
Therefore when $\ell+ \alpha=1$ with $\dfrac{3}{2} < p \le 6$, 
we have 
\begin{equation} \label{eq:4.4}
\begin{split}
\int_{0}^{T} \| \nabla^{\alpha} \partial_{t}^{\ell} Q^{(1)}_{L}(t-s) \ast g(s)  \|_{p} ds
\le C \int_{0}^{T}  \| g(s) \|_{\frac{3p}{p+3}} ds
\end{split}
\end{equation}
by \eqref{eq:4.3}, \eqref{eq:3.7} and \eqref{eq:3.6}. 
Moreover the H\"{o}lder inequality yields
\begin{equation} \label{eq:4.5}
\begin{split}
\| g(s) \|_{\frac{3p}{p+3}} \le \| g(s) \|_{1}^{\frac{2}{p}-\frac{1}{3}} \| g(s) \|_{2}^{\frac{4}{3}-\frac{2}{p}},
\end{split}
\end{equation}
since $1< \dfrac{3p}{p+3} \le 2$ for $\dfrac{3}{2} < p \le 6$.
The estimates \eqref{eq:4.4} and \eqref{eq:4.5} show the desired estimate \eqref{eq:4.1}.
When for $\ell+ \alpha=1$ with $2 \le p \le \infty$,
denoting $\frac{1}{p}+\frac{1}{p'}=1$,  
we easily have 
\begin{equation*} 
\begin{split}
\int_{0}^{T} \| \nabla^{\alpha} \partial_{t}^{\ell} Q^{(1)}_{L}(t-s) \ast g(s)  \|_{p} ds
& \le C\int_{0}^{T} \| \xi^{\alpha}  \partial_{t}^{\ell} \hat{Q}^{(1)}_{L}(t-s) \|_{p'} \| \hat{g}(s) \|_{\infty} ds \\
& \le C \| g \|_{L^{1}(0,T;L^{1})}
\end{split}
\end{equation*}
by \eqref{eq:3.6} and the fact that $\frac{1}{|\xi|} \chi_{L} \in L^{1} \cap L^{2}(\R^{3})$, 
which proves the desired estimate \eqref{eq:4.2}.

For the case $\ell+ \alpha=2$ with $1< p < \infty$,
noting \eqref{eq:4.3}, we have 
\begin{equation*} 
\begin{split}
\int_{0}^{T} \| \nabla^{\alpha} \partial_{t}^{\ell} Q^{(1)}_{L}(t-s) \ast g(s)  \|_{p} ds
\le C\int_{0}^{T} \| \mathcal{F}^{-1} [\chi_{L} \hat{g}(s) ] \|_{p} ds \le C \| g \|_{L^{1}(0,T;L^{1})}
\end{split}
\end{equation*}
by \eqref{eq:3.6}, which is the desired conclusion \eqref{eq:4.2}.
When $\ell+ \alpha=2$ with $p=\infty$,
the direct calculation with \eqref{eq:4.3} shows  
\begin{equation*} 
\begin{split}
\int_{0}^{T} \| \nabla^{\alpha} \partial_{t}^{\ell} Q^{(1)}_{L}(t-s) \ast g(s)  \|_{\infty} ds
\le C \int_{0}^{T} \| \chi_{L} \hat{g}(s) \|_{1} ds \le C \| g \|_{L^{1}(0,T;L^{1})},
\end{split}
\end{equation*}
which is the desired estimate \eqref{eq:4.2}.
The remainder part $\ell+ \alpha \ge 3$ with $1 \le p \le \infty$ is more easier than the previous cases.
We omit the detail. 
We complete the proof of Proposition \ref{prop:4.2}.
\end{proof}
\subsection{Middle and High frequency parts}
We collect the asymptotic behavior of the characteristic roots as $|\xi| \to \infty$.
\begin{lem} \label{Lem:4.3}
Let $j=1,2$. Then it holds that 
\begin{equation*}
\begin{split}
& \sigma_{j,+} = -\frac{\alpha_{j}^{2}}{\nu} +O(|\xi|^{-2}), \\ 
& \sigma_{j,-} = -\nu |\xi|^{2}  +O(1), \\ 
& \dfrac{1}{
\sigma_{j,+} -\sigma_{j,-}
} = \dfrac{1}{\nu |\xi|^{2} } +O(|\xi|^{-4}), \\
& \dfrac{1}{1-e^{\sigma_{j,+} T}} = \dfrac{1}{1-e^{-\frac{\alpha_{j}^{2}}{\nu}T}} + O(|\xi|^{-2}), \\
& \dfrac{1}{1-e^{\sigma_{j,-} T}} = 1+O(e^{-|\xi|^{2}})
\end{split}
\end{equation*}
as $|\xi| \to \infty$.
\end{lem}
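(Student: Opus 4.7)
The plan is to derive all five asymptotic expansions from direct Taylor expansion of the explicit formulas for $\sigma_{j,\pm}$, treating $|\xi|^{-2}$ as the small parameter. First I would factor out $\nu|\xi|^2$ from the square root in
\[
\sigma_{j,\pm} = \frac{-\nu|\xi|^2 \pm \sqrt{\nu^2|\xi|^4 - 4\alpha_j^2|\xi|^2}}{2}
\]
to write $\sqrt{\nu^2|\xi|^4 - 4\alpha_j^2|\xi|^2} = \nu|\xi|^2\sqrt{1 - 4\alpha_j^2/(\nu^2|\xi|^2)}$, which is valid for $|\xi|$ large enough so that the radicand is positive. Expanding $\sqrt{1-\epsilon} = 1 - \epsilon/2 + O(\epsilon^2)$ with $\epsilon = 4\alpha_j^2/(\nu^2|\xi|^2)$ gives the expansion $\nu|\xi|^2 - 2\alpha_j^2/\nu + O(|\xi|^{-2})$. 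Substituting into the formulas for $\sigma_{j,\pm}$ immediately yields $\sigma_{j,+} = -\alpha_j^2/\nu + O(|\xi|^{-2})$ and $\sigma_{j,-} = -\nu|\xi|^2 + \alpha_j^2/\nu + O(|\xi|^{-2})$, the latter being stronger than the stated $O(1)$ bound.

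Next, for the difference I would use the same expansion to get $\sigma_{j,+} - \sigma_{j,-} = \nu|\xi|^2 - 2\alpha_j^2/\nu + O(|\xi|^{-2})$, and then invert via the geometric series $1/(1-x) = 1 + O(x)$ for small $x = 2\alpha_j^2/(\nu^2|\xi|^2) + O(|\xi|^{-4})$, producing $(\sigma_{j,+} - \sigma_{j,-})^{-1} = (\nu|\xi|^2)^{-1} + O(|\xi|^{-4})$.

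For the exponential quotients, I would separate the two cases. For $\sigma_{j,+}$, since $\sigma_{j,+}$ tends to the finite limit $-\alpha_j^2/\nu$, one has $e^{\sigma_{j,+}T} = e^{-\alpha_j^2 T/\nu}(1 + O(|\xi|^{-2}))$ by the smoothness of the exponential, and consequently $1 - e^{\sigma_{j,+}T} = (1 - e^{-\alpha_j^2 T/\nu}) + O(|\xi|^{-2})$. Because $1 - e^{-\alpha_j^2 T/\nu}$ is a nonzero constant (using $\alpha_j > 0$ and $T > 0$), the geometric series inversion again gives the claimed expansion. For $\sigma_{j,-}$, the factor $e^{\sigma_{j,-}T}$ is super-exponentially small: from $\sigma_{j,-} \le -\nu|\xi|^2/2$ for $|\xi|$ large, we get $|e^{\sigma_{j,-}T}| \le e^{-c|\xi|^2}$, so $1 - e^{\sigma_{j,-}T} = 1 + O(e^{-c|\xi|^2})$ and inversion is trivial.

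There is no real obstacle here; the calculation is essentially mechanical. The only minor subtlety is ensuring that the error terms inside $e^{\sigma_{j,+}T}$ are absorbed correctly — i.e., that $e^{O(|\xi|^{-2})} = 1 + O(|\xi|^{-2})$ — which follows from the elementary bound $|e^z - 1| \le 2|z|$ for $|z|$ small. The constants appearing in all the big-$O$ terms depend only on $\nu$, $\lambda$, $\mu$, and $T$, which is consistent with the statement.
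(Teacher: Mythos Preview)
Your proposal is correct and matches the paper's approach exactly: the paper simply states that Lemma~\ref{Lem:4.3} ``follows from a direct computation by using the formulas of $\sigma_{j,\pm}$,'' and you have carried out precisely that computation in full detail. Your version is in fact more careful than the paper's sketch, in particular your $O(e^{-c|\xi|^2})$ for the last item (with $c$ depending on $\nu,T$) is the honest statement.
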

Lemma \ref{Lem:4.3} follows from a direct computation by using the formulas of $\sigma_{j,\pm}$. 
%

The following proposition is the estimates of $Q_{H}(t) \ast$, the high frequency parts of $Q(t) \ast$, in the $L^{p}$-Sobolev spaces,
which suggests that $Q_{H}(t) \ast$ behaves like $K_{1H}(t) \ast$.
\begin{prop} \label{prop:4.4}
Let $\alpha \ge 0$, $\ell \ge 0$ and $t>0$. 
	Then, there exists a constant $C=C_{T}>0$ such that
	\begin{equation} \label{eq:4.6}
		\begin{split}
			& \sum_{j=1}^{2} \int_{0}^{T} 
			\| \partial_{t}^{\ell} \nabla^{\alpha} Q_{H}^{(j)}(t-s) \ast g(s) \|_{p}  ds \le C \displaystyle\int_{0}^{T} \| \nabla^{\tilde{\alpha}} g(s) \|_{p} ds
		\end{split}
	\end{equation}
	for $\tilde{\alpha} \ge \max\{0, \alpha-2,  2 \ell + \alpha-2 \}$, 
	$1 < p< \infty$ and $1 \le q \le p$ 
	and
	\begin{equation} \label{eq:4.7}
		\begin{split}
			\sum_{j=1}^{2} \int_{0}^{T}
			 \| \partial_{t}^{\ell} \nabla^{\alpha} Q_{M}^{(j)}(t-s) \ast g \|_{p} ds
			\le C \int_{0}^{T} \| \nabla^{\tilde{\alpha}} g(s) \|_{q} ds 
		\end{split}
	\end{equation}
	for $1 \le q \le p \le \infty$ and $\tilde{\alpha} \ge 0$.
\end{prop}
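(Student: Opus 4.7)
I will derive both \eqref{eq:4.6} and \eqref{eq:4.7} by direct Fourier-multiplier analysis of the symbols $\wh Q_M^{(j)}$ and $\wh Q_H^{(j)}$, using the asymptotic expansions of Lemma \ref{Lem:4.3} together with the $L^p$-boundedness of Riesz-type multipliers (Lemma \ref{Lem:3.2}). The middle-frequency estimate will be straightforward because all relevant quantities are compactly supported; the high-frequency estimate will require a careful symbol decomposition and Mihlin-type reasoning, with the delicate point being uniform control as $t-s+T \to 0^+$.

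\textbf{Middle frequency estimate \eqref{eq:4.7}.} On $\supp \chi_M \subset \{c_0/2 \le |\xi| \le 2c_1\}$ the functions $\sigma_{j,\pm}$, $(\sigma_{j,+}-\sigma_{j,-})^{-1}$ and $(1-e^{\sigma_{j,\pm}T})^{-1}$ are all smooth and uniformly bounded -- the last because $\Re \sigma_{j,\pm} = -\nu|\xi|^2/2 < 0$ there, keeping $e^{\sigma_{j,\pm}T}$ away from $1$. Hence $\pt_t^\ell \nabla^\al \wh Q_M^{(j)}(t-s,\xi)$ is smooth and compactly supported in $\xi$, with bounds on all $\xi$-derivatives uniform in $s \in [0,T]$ and $t > 0$. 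Its inverse Fourier transform therefore lies in every $L^r(\R^3)$ with $r$-norm bounded independently of $t,s$, and Young's convolution inequality yields \eqref{eq:4.7} for any $1 \le q \le p \le \infty$ after integrating in $s$.

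\textbf{High frequency estimate \eqref{eq:4.6}.} I will split
\begin{equation*}
\wh Q_H^{(j)}(t-s,\xi) = A_+(t-s,\xi) + A_-(t-s,\xi), \qquad A_\pm := \pm \frac{e^{\sigma_{j,\pm}(t-s+T)}}{(\sigma_{j,+}-\sigma_{j,-})(1-e^{\sigma_{j,\pm}T})} R_j \chi_H,
\end{equation*}
and handle each piece separately. By Lemma \ref{Lem:4.3}, if $c_1$ is chosen large enough that $1-e^{\sigma_{j,\pm}T}$ stays bounded away from zero on $\{|\xi| \ge c_1\}$, the factor $(\sigma_{j,+}-\sigma_{j,-})^{-1}$ satisfies Mihlin-type bounds of homogeneity $-2$, $\sigma_{j,+}$ has homogeneity $0$, and $\sigma_{j,-}$ has homogeneity $+2$. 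For $A_+$, applying $\pt_t^\ell \nabla^\al$ produces a bare symbol of order $\al - 2$, multiplied by $e^{\sigma_{j,+}(t-s+T)}/(1-e^{\sigma_{j,+}T})$, which is uniformly bounded with bounded $\xi$-derivatives because $\Re \sigma_{j,+} \le -\al_1^2/(2\nu) < 0$ at high frequency and $t-s+T \ge 0$. Composing with $\nabla^{\tilde\al}$ reduces the order to $\al - 2 - \tilde\al \le 0$ whenever $\tilde\al \ge \max\{0, \al-2\}$, and Mihlin's theorem (reducing to iterated application of Lemma \ref{Lem:3.2}) will then give $L^p$-boundedness for $1 < p < \infty$.

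\textbf{Main obstacle: the $A_-$ piece.} The delicate term is $A_-$, where $\pt_t^\ell$ brings down $(\sigma_{j,-})^\ell$ of leading order $+2\ell$, so the bare symbol has order $2\ell + \al - 2$. The Gaussian-type factor $e^{\sigma_{j,-}(t-s+T)} \sim e^{-c|\xi|^2(t-s+T)}$ would normally provide extra smoothing, but it degenerates as $t-s+T \to 0^+$, a situation that cannot be excluded since $s$ can reach $T$ while $t$ tends to $0^+$. I must therefore verify that the combined symbol $|\xi|^{2\ell + \al - 2 - \tilde\al} e^{-c|\xi|^2(t-s+T)}$ obeys Mihlin bounds uniformly in $t-s+T \ge 0$; since differentiation in $\xi$ only produces bounded factors of the form $(c|\xi|^2 \tau)^k e^{-c|\xi|^2 \tau}$, this reduces to requiring $2\ell + \al - 2 - \tilde\al \le 0$, which precisely accounts for the third term in $\max\{0, \al-2, 2\ell+\al-2\}$. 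Combining the $L^p$-bounds for $A_+$ and $A_-$ and integrating in $s \in [0,T]$ then yields \eqref{eq:4.6}.
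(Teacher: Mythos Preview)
Your proposal is correct and follows essentially the same route as the paper: split $\wh{Q}_H^{(j)}$ according to the two roots $\sigma_{j,\pm}$ using Lemma \ref{Lem:4.3}, treat the $\sigma_{j,+}$-piece as an order-$(\alpha-2)$ multiplier, and control the Gaussian $\sigma_{j,-}$-piece uniformly in $\tau=t-s+T$, which is exactly where the constraint $\tilde\alpha\ge 2\ell+\alpha-2$ enters. The only minor technical difference is that for your $A_-$-piece the paper, instead of appealing to a Mihlin-type bound, factors the $|\xi|^{2\ell+\alpha-2}$ onto $g$ as derivatives and then uses Young's inequality together with the elementary bound $\|\mathcal{F}^{-1}[\chi_H e^{-\nu|\xi|^2\tau}]\|_1\le C$ obtained from the decomposition $\chi_H=1-\chi_L-\chi_M$ and the $L^1$-normalisation of the heat kernel.
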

%
%
\begin{proof}
We only show the estimate \eqref{eq:4.6} for $j=1$.
The case \eqref{eq:4.6} for $j=2$ is shown by the same way.
The estimate \eqref{eq:4.7} is easily verified by the expression of $\hat{Q}_{M}^{(j)}$, since the support of $\hat{Q}_{M}^{(j)}$
is bounded and does not include the neighborhood of the origin in the Fourier space.
To show the estimate \eqref{eq:4.6}, 
we observe that  
\begin{equation*}
\begin{split}
\partial_{t}^{\ell} \hat{Q}^{(1)}_{H}(t-s, \xi) & = 
\left( -\frac{\alpha_{j}^{2}}{\nu} \right)^{\ell}
\dfrac{
e^{-\frac{\alpha_{j}^{2}}{\nu}(t-s+T)} \chi_{H}
}
{
\nu |\xi|^{2} \left( 1-e^{-\frac{\alpha_{j}^{2}}{\nu}T} \right) 
} R_{1}
+O(|\xi|^{-4}) \\
& +(-\nu |\xi|^{2} )^{\ell}
\dfrac{
e^{-\nu |\xi|^{2} (t-s+T)} \chi_{H}
}
{
\nu |\xi|^{2} 
} R_{1} + e^{-\nu |\xi|^{2} (t-s+T)} O(|\xi|^{2(\ell-2)}) 
\end{split}
\end{equation*}
as $|\xi| \to \infty$ for $\ell \ge 0$ by Lemma \ref{Lem:4.3}.
Applying \eqref{eq:3.6}, 
we see that 
\begin{equation}  \label{eq:4.8}
\begin{split}
& \int_{0}^{T} \| \nabla^{\alpha} \partial_{t}^{\ell} Q^{(1)}_{H}(t-s) \ast g(s) \|_{p}ds  \\
& \le C \int_{0}^{T} \| \mathcal{F}^{-1}[ \chi_{H} |\xi|^{\alpha-2} \hat{g}(s)] \|_{p}
+  \| \mathcal{F}^{-1}[ \chi_{H} |\xi|^{2 \ell+\alpha-2} e^{-\nu |\xi|^{2} (t-s+T)}  \hat{g}(s)] \|_{p} ds.
\end{split}
\end{equation}
When $2 \ell +\alpha-2 \ge 0$, 
we have 
\begin{equation}  \label{eq:4.9}
\begin{split}
& \int_{0}^{T}  \| \mathcal{F}^{-1}[ \chi_{H} |\xi|^{2 \ell+\alpha-2} e^{-\nu |\xi|^{2} (t-s+T)}  \hat{g}(s)] \|_{p} ds \\
& \le C \int_{0}^{T}  \| \mathcal{F}^{-1}[ \chi_{H}  e^{-\nu |\xi|^{2} (t-s+T)} ] \|_{1} 
\| \nabla^{2 \ell+\alpha-2} g(s) \|_{p} ds \\
& \le C \int_{0}^{T}  \| \mathcal{F}^{-1}[e^{-\nu |\xi|^{2} (t-s+T)} ] \|_{1} 
\| \nabla^{2 \ell+\alpha-2} g(s) \|_{p} ds \\
& + C \int_{0}^{T}  \| \mathcal{F}^{-1}[(\chi_{L}+\chi_{M})e^{-\nu |\xi|^{2} (t-s+T)} ] \|_{1} 
\| \nabla^{2 \ell+\alpha-2} g(s) \|_{p} ds \\
& \le C \int_{0}^{T} 
\| \nabla^{2 \ell+\alpha-2} g(s) \|_{p} ds, 
\end{split}
\end{equation}
where we used the fact that $\chi_{H}=1-\chi_{L}-\chi_{M}$.
In contrast, if $2 \ell +\alpha -2< 0$ and $\tilde{\alpha} \ge \max\{ \alpha-2,0 \}$,
we see that
\begin{equation}  \label{eq:4.10}
\begin{split}
& \int_{0}^{T}  \| \mathcal{F}^{-1}[ \chi_{H} |\xi|^{2 \ell+\alpha-2} e^{-\nu |\xi|^{2} (t-s+T)}  \hat{g}(s)] \|_{p} ds \\
& \le C \int_{0}^{T}  \| \mathcal{F}^{-1}[ \chi_{H}  |\xi|^{2 \ell+\alpha-2-\tilde{\alpha}} e^{-\nu |\xi|^{2} (t-s+T)} ] \|_{1} 
\| \nabla^{\tilde{\alpha}} g(s) \|_{p} ds \\
& \le C \int_{0}^{T}  (T+t-s)^{-\ell-\frac{\alpha}{2}+1+\frac{\tilde{\alpha}}{2}}
\| \nabla^{\tilde{\alpha}} g(s) \|_{p} ds \\
& \le C \int_{0}^{T} 
\| \nabla^{\tilde{\alpha}} g(s) \|_{p} ds,
\end{split}
\end{equation}
where we used the fact that
\begin{equation*}
\begin{split}
(T+t-s)^{-\ell-\frac{\alpha}{2}+1+\frac{\tilde{\alpha}}{2}} \le C_{T}
\end{split}
\end{equation*}
for $0 \le s,t \le T$,
since $-\ell-\frac{\alpha}{2}+1+\frac{\tilde{\alpha}}{2} \ge 0$.
Combining the estimates \eqref{eq:4.8}-\eqref{eq:4.10}, 
we obtain the desired estimate \eqref{eq:4.6}.
\end{proof}
%
%
\section{Existence and Regularity}
In this section, we give the proof of Theorems \ref{thm:2.1} and \ref{thm:2.3}.
\subsection{Proof of the existence of time periodic solution}
First, we define the solution space as 
\begin{equation*}
\begin{split}
X_{1}:= \left\{ 
u \in \{ C([0,T]; \dot{W}^{3,p_{0}} \cap \dot{H}^{3} \cap \dot{H}^{1}) 
\cap C^{1}([0,T]; W^{1,p_{0}} \cap H^{1} ) \}^{3} ; \| u \|_{X_{1}} < \infty \right\}
\end{split}
\end{equation*}
with the norm 
\begin{equation*}
\begin{split}
\| u \|_{X_{1}} := \sup_{t \in [0,T]} \left\{ 
\sum_{q= p_{0},2} (\| \nabla^{3} u_{per}(t) \|_{q} +\| \nabla \partial_{t} u_{per}(t) \|_{q} 
+\| \partial_{t} u_{per}(t) \|_{q} )+\| \nabla u_{per}(t) \|_{2}
\right\}. 
\end{split}
\end{equation*}
Noting the integral equation \eqref{eq:2.5},  
we define the mapping $\Phi_{1}$ on $\mathbb{B}_{R}:= \{ u \in X_{1}; \| u \|_{X_{1}} \le R \}$, 
the ball with a radius $R>0$ in $X_{1}$, 
where $R$ is determined later:
\begin{equation*} 
\begin{split}
\Phi_{1}[u] (t) & := I_{1}[u](t)+J_{1}[g](t),
\end{split}
\end{equation*}
where
\begin{equation*}
\begin{split}
I_{1}[u](t) & := \int_{0}^{T} Q(t-s) \ast F(u)(s) ds + \int_{0}^{t} K_{1}(t-s) \ast F(u)(s) ds, \\
J_{1}[g](t) & := \int_{0}^{T} Q(t-s) \ast g(s) ds + \int_{0}^{t} K_{1}(t-s) \ast g(s) ds.
\end{split}
\end{equation*}
We claim that $\Phi_{1}$ is a contraction mapping on $\mathbb{B}_{R}:= \{ u \in X_{1}; \| u \|_{X_{1}} \le R \}$, 
if $R>0$ is sufficiently small.
Namely, we prove that the estimates 
\begin{equation} \label{eq:5.1}
\begin{split}
\| \Phi_{1}[u]  \|_{X_{1}} \le R
\end{split}
\end{equation}
and 
\begin{equation} \label{eq:5.2}
\begin{split}
\| \Phi_{1}[u] - \Phi_{1}[v] \|_{X_{1}} \le \frac{1}{2} \| u-v \|_{X_{1}}
\end{split}
\end{equation}
for $u,v \in \mathbb{B}_{R}$.
Once we have the estimates \eqref{eq:5.1} and \eqref{eq:5.2},
we conclude the unique existence of the periodic solution satisfying the integral equation \eqref{eq:2.5}
by the Banach fixed point theorem.

First, we estimate the nonlinear terms in $X_{1}$ as in \cite{K-T1} and \cite{K-T2} by \eqref{eq:3.9} and \eqref{eq:3.10}:
\begin{equation} \label{eq:5.3}
\begin{split}
\| F(u)(s) \|_{1} \le C \| \nabla u(s) \|_{2} \| \nabla^{2} u(s) \|_{2} \le C \| \nabla u(s) \|_{2}^{\frac{3}{2}} \| \nabla^{3} u(s) \|_{2}^{\frac{1}{2}}  \le C \| u \|_{X_{1}}^{2},
\end{split}
\end{equation}
\begin{equation} \label{eq:5.4}
\begin{split}
\| F(u)(s) \|_{2} 
\le C \| \nabla u(s) \|_{\infty} \| \nabla^{2} u(s) \|_{2} 
\le C \| \nabla u(s) \|_{2}^{\frac{3}{4}} \| \nabla^{3} u(s) \|_{2}^{\frac{5}{4}} 
\le C \| u \|_{X_{1}}^{2}
\end{split}
\end{equation}
and
\begin{equation} \label{eq:5.5}
\begin{split}
\| \nabla F(u)(s) \|_{q} & \le C \| \nabla^{2} u(s) \|_{2q}^{2} + \| \nabla  u\nabla^{3} u(s) \|_{q} 
\le C \| \nabla u(s) \|_{\infty} \| \nabla^{3} u(s) \|_{q}  \\
&  \le C \| \nabla u(s) \|_{2}^{\frac{1}{4}} \| \nabla^{3} u(s) \|_{2}^{\frac{3}{4}} \| \nabla^{3} u(s) \|_{q} \le C \| u \|_{X_{1}}^{2}
\end{split}
\end{equation}
with $q=2$ and $p_{0}$.
For the simplicity of the notation, we denote 
\begin{equation*}
\begin{split}
I_{1}[u](t) =I_{11}[u](t)+I_{12}[u](t),
\end{split}
\end{equation*}
where
\begin{equation*}
\begin{split}
I_{11}[u](t) & := \int_{0}^{T} Q(t-s) \ast F(u)(s) ds, \\
I_{12}[u](t) & := \int_{0}^{t} K_{1}(t-s) \ast F(u)(s) ds.
\end{split}
\end{equation*}
We begin with the estimates for $I_{11}[u](t)$.
Applying the estimates \eqref{eq:4.2}, \eqref{eq:4.6}, \eqref{eq:4.7}, \eqref{eq:5.3} \and \eqref{eq:5.4}, 
we have
\begin{equation} \label{eq:5.6}
\begin{split}
\left\|
\nabla I_{11}[u](t)
\right\|_{2} 
& \le
\sum_{l=L,M,H}
\int_{0}^{T}
 \left\| \nabla Q_{l}(t-s) \ast F(u)(s) \right\|_{2}
ds \\
& \le C
\int_{0}^{T}
 \left\|  F(u)(s) \right\|_{1} +\left\|  F(u)(s) \right\|_{2}
ds 
\le C \|u \|_{X_{1}}^{2}. 
\end{split}
\end{equation}
Similarly, we apply the estimates \eqref{eq:4.2}, \eqref{eq:4.6}, \eqref{eq:4.7}, \eqref{eq:5.3} and \eqref{eq:5.5} to see that  
\begin{equation} \label{eq:5.7}
\begin{split}
& \sum_{q=p_{0},2}\left\|
\nabla^{3} I_{11}[u](t)
\right\|_{q} + \sum_{q=p_{0},2} \sum_{\alpha=0,1}\left\|
\nabla^{\alpha} \partial_{t} I_{11}[u](t)
\right\|_{q} \\
& \le  \sum_{q=p_{0},2} \sum_{l=L,M,H}
\int_{0}^{T}
 \left\| \nabla^{3} Q_{l}(t-s) \ast F(u)(s) \right\|_{q}
ds \\
& + \sum_{q=p_{0},2} \sum_{\alpha=0,1} \sum_{l=L,M,H}
\int_{0}^{T}
 \left\| \nabla^{\alpha} \partial_{t} Q_{l}(t-s)\ast F(u)(s) \right\|_{q}
ds 
 \\
& \le C
\int_{0}^{T}
 \left\|  F(u)(s) \right\|_{1}
ds 
+ C\int_{0}^{T} 
( \left\| \nabla F(u)(s) \right\|_{p_{0}} +\left\| \nabla F(u)(s) \right\|_{2} )
ds 
 \\
& \le C \|u \|_{X_{1}}^{2}.
\end{split}
\end{equation}
Secondly, we estimate $I_{12}[u]$.
Here we note that the estimates of $I_{12}[u]$ in the $L^{2}$-Sobolev space are shown in \cite{K-T1}.
Namely, we have
\begin{equation} \label{eq:5.8}
\begin{split}
& \left\|
\nabla^{\alpha} I_{12}[u](t)
\right\|_{2} \le C(1+t)^{-\frac{1}{4}-\frac{\alpha}{2} } \| u \|_{X_{1}}^{2}
\le C \|u \|_{X_{1}}^{2}
\end{split}
\end{equation}
for $1 \le \alpha \le 3$ and 
\begin{equation} \label{eq:5.9}
\begin{split}
& \left\|
\nabla^{\alpha} \partial_{t} I_{12}[u](t)
\right\|_{2} \le C(1+t)^{-\frac{3}{4}-\frac{\alpha}{2} } \| u \|_{X_{1}}^{2}
\le C \|u \|_{X_{1}}^{2}
\end{split}
\end{equation}
for $0 \le \alpha \le 1$.
The remainder part of the estimate of $I_{12}[u]$ is estimated as follows:
\begin{equation} \label{eq:5.10}
\begin{split}
\left\|
\nabla^{3} I_{12}[u](t)
\right\|_{p_{0}}
& \le \sum_{l=L,M,H}
\int_{0}^{t}
 \left\| \nabla^{3} K_{1l}(t-s) \ast F(u)(s) \right\|_{p_{0}}
ds \\
& \le C
\int_{0}^{t}(1+t-s)^{-\frac{5}{2}(1-\frac{1}{p_{0}})- \frac{1}{2} }
( \left\| F(u)(s) \right\|_{1} +\left\| \nabla F(u)(s) \right\|_{p_{0}} )
ds \\
& \le C\|u \|_{X_{1}}^{2}
\int_{0}^{t}
ds 
\le C_{T} \|u \|_{X_{1}}^{2}
\end{split}
\end{equation}
and 
\begin{equation} \label{eq:5.11}
\begin{split}
& \sum_{q=p_{0},2} \sum_{\alpha=0,1}\left\|
\nabla^{\alpha} \partial_{t} I_{12}[u](t)
\right\|_{q} \\
& \le \sum_{q=p_{0},2} \sum_{\alpha=0,1} \sum_{l=L,M,H}
\int_{0}^{t}
 \left\| \nabla^{\alpha} \partial_{t} K_{1l}(t-s)\ast F(u)(s) \right\|_{q}
ds 
 \\
& \le C \sum_{q=p_{0},2} \sum_{\alpha=0,1}
\int_{0}^{t}(1+t-s)^{-\frac{5}{2}(1-\frac{1}{q})+1- \frac{\alpha+1}{2} }
( \left\| F(u)(s) \right\|_{1} +\left\| \nabla F(u)(s) \right\|_{q} )
ds \\
& \le C\|u \|_{X_{1}}^{2}
\int_{0}^{t}
ds 
\le C_{T} \|u \|_{X_{1}}^{2}
\end{split}
\end{equation}
by \eqref{eq:3.2}, \eqref{eq:3.4}, \eqref{eq:3.5}, \eqref{eq:5.3} and \eqref{eq:5.5}.
Using the same argument, we also have 
\begin{equation} \label{eq:5.12}
\begin{split}
& \sum_{q=p_{0},2}
 \left\|
\nabla^{3} J_{1}[g](t)
\right\|_{q} +\left\|
\nabla J_{1}[g](t)
\right\|_{2} +\sum_{q=p_{0},2} 
 \sum_{\alpha=0,1}
\left\|
\nabla^{\alpha} \partial_{t} J_{1}[u](t)
\right\|_{2}  \\
& 
\le C \|g \|_{L^{1}(0,T; \dot{W}^{1,p_{0}} \cap \dot{H}^{1} \cap L^{1} )}.
\end{split}
\end{equation}
Combining \eqref{eq:5.6}-\eqref{eq:5.12},
we arrive at the estimate 
\begin{equation*}
\begin{split}
\| \Phi_{1}[u] \|_{X_{1}} \le C_{0} \| u \|_{X_{1}}^{2} + C_{1} \|g \|_{L^{1}(0,T; \dot{W}^{1,p_{0}} \cap \dot{H}^{1} \cap L^{1} )},
\end{split}
\end{equation*}
where $C_{0}>0$ and $C_{1}>0$.
This implies that choosing $R= 2C_{1} \|g \|_{L^{1}(0,T; \dot{W}^{1,p_{0}} \cap \dot{H}^{1} \cap L^{1} )}$ and $\tilde{\varepsilon}_{0} =\frac{1}{4 C_{0} C_{1}}$,
we get the desired estimate \eqref{eq:5.1} for $0< \varepsilon_{0} < \tilde{\varepsilon}_{0}$.
Again, we apply the same argument to deduce that  
\begin{equation*}
\begin{split}
\| \Phi_{1}[u] -\Phi_{1}[v]  \|_{X_{1}} \le C_{2} R \| u-v \|_{X_{1}}.
\end{split}
\end{equation*}
Namely, taking $\varepsilon_{0}' =\frac{1}{4 C_{1} C_{2}}$,
we have the estimate \eqref{eq:5.2} for $0< \varepsilon_{0} < \varepsilon_{0}'$.
Therefore we have a unique time periodic solution $u_{per}$ in 
\begin{equation*} 
Z:= \{ C([0,T]; \dot{W}^{3,p_{0}} \cap \dot{H}^{3} \cap \dot{H}^{1}) 
\cap C^{1}([0,T]; W^{1, p_{0}} \cap H^{1} ) \}^{3} 
\end{equation*}
satisfying the estimate \eqref{eq:2.6} with 
$\varepsilon_{0}:= \min \{ \tilde{\varepsilon}_{0} ,\varepsilon_{0}' \}$. 
Once we have a unique time periodic solution in $Z$ with \eqref{eq:2.6},
the argument in \cite{K-T} immediately yields the uniqueness of $u_{per}$ in  
$$
\{ C_{per}(\R; \dot{W}^{3,p_{0}} \cap \dot{H}^{3} \cap \dot{H}^{1} ) 
\cap C_{per}^{1}(\R; W^{1, p_{0}} \cap H^{1} ) \}^{3}.
$$
This completes the proof of Theorem \ref{thm:2.1}.
\subsection{Regularity estimates of the periodic solutions}
In this subsection, we give a proof of Theorem \ref{thm:2.3}.
For simplicity of notation, we let $u$ stand for $u_{per}$.
We firstly prove the estimate \eqref{eq:2.12}.
When $2 \le p \le \infty$, 
we have 
\begin{equation*}
\begin{split}
\| \nabla u(t) \|_{p} \le \| \nabla u(t) \|_{\infty}^{1-\frac{2}{p}} \| \nabla u(t) \|_{2}^{\frac{2}{p}} 
\le C \| \nabla^{3} u(t) \|_{2}^{\frac{3}{4}(1-\frac{2}{p})} \| \nabla u(t) \|_{2}^{\frac{2}{p}+\frac{1}{4}(1-\frac{2}{p})} \le C \varepsilon_{0}
\end{split}
\end{equation*}
by \eqref{eq:3.1} and \eqref{eq:3.9}, which is the desired result.
When $\frac{3}{2} < p < 2$, 
we see that 
\begin{equation} \label{eq:5.13}
\begin{split}
\| \nabla u(t) \|_{p} \le \| \nabla I_{11}[u](t) \|_{p} + \| \nabla I_{12}[u](t) \|_{p} + \| \nabla J_{1}[g](t) \|_{p}
\end{split}
\end{equation}
by the integral equation \eqref{eq:2.5}.

For the estimate of $\| \nabla I_{11}[u](t) \|_{p}$ with $\frac{3}{2}< p<2$, 
we apply the estimates \eqref{eq:4.1}, \eqref{eq:4.6}, \eqref{eq:4.7}, \eqref{eq:5.3} and \eqref{eq:5.4} to see that 
\begin{equation} \label{eq:5.14}
\begin{split}
& \| \nabla I_{11}[u](t) \|_{p} \\
& \le C \int_{0}^{T} \| F(u)(s)\|_{1}^{\frac{2}{p}-\frac{1}{3}} \| F(u)(s)\|_{2}^{\frac{4}{3}-\frac{2}{p}} 
+ \| F(u)(s) \|_{1}^{\frac{2}{p}-1} \| F(u)(s) \|_{2}^{2-\frac{2}{p}} ds \\
& \le C_{T} \varepsilon_{0}, 
\end{split}
\end{equation}
where we used the fact that 
\begin{equation} \label{eq:5.15}
\begin{split}
\| f \|_{p} \le \| f \|_{1}^{\frac{2}{p}-1} \| f \|_{2}^{2-\frac{2}{p}}
\end{split}
\end{equation}
for $1\le p \le 2$ with taking $f=F(u(s))$.
Next, we show the estimate for $\| \nabla I_{12}[u](t) \|_{p}$ with $\frac{3}{2}< p<2$.
The estimates \eqref{eq:3.2}, \eqref{eq:3.4}, \eqref{eq:3.5}, \eqref{eq:5.15} with $f=F(u(s))$, \eqref{eq:5.3} and \eqref{eq:5.4} lead to 
\begin{equation} \label{eq:5.16}
\begin{split}
\| \nabla I_{12}[u](t) \|_{p} \le C \int_{0}^{t} (1+t-s)^{\frac{1}{2}} \| F(u)(s) \|_{1}^{\frac{2}{p}-1} \| F(u)(s) \|_{2}^{2-\frac{2}{p}} ds
 \le C_{T} \varepsilon_{0}.
\end{split}
\end{equation} 
By the same way, we also have 
\begin{equation} \label{eq:5.17}
\begin{split}
\| \nabla J_{1}[u](t) \|_{p} \le C_{T} \| g \|_{L^{1}(0,T;L^{1} \cap \dot{H}^{1} \cap \dot{W}^{1,p_{0}})}
 \le C_{T} \varepsilon_{0} 
\end{split}
\end{equation} 
based on the fact that $L^{1} \cap \dot{H}^{1} \cap \dot{W}^{1,p_{0}} = L^{1} \cap \dot{H}^{1} \cap \dot{W}^{1,p_{0}} \cap L^{2}$.

Summing up the estimates \eqref{eq:5.13}, \eqref{eq:5.14}, \eqref{eq:5.16} and \eqref{eq:5.17}, 
we obtain the estimate \eqref{eq:2.12} for $\frac{3}{2} < p <2$.

Secondly we prove the estimate \eqref{eq:2.13}. 
The case $2 \le p \le p_{\ast}$ is shown by the interpolation between $\dot{H}^{2}$ and $\dot{W}^{3, p_{0}}$.
We omit the proof. 
For $1< p \le 2$, 
we use the estimates \eqref{eq:4.2}, \eqref{eq:4.6}, \eqref{eq:4.7}, \eqref{eq:5.15} with $f=F(u(s))$, \eqref{eq:5.3} and \eqref{eq:5.4} to show  
\begin{equation} \label{eq:5.18}
\begin{split}
\| \nabla^{2} I_{11}[u](t) \|_{p} 
& \le C \int_{0}^{T} \| F(u)(s)\|_{1}
+ \| F(u)(s) \|_{1}^{\frac{2}{p}-1} \| F(u)(s) \|_{2}^{2-\frac{2}{p}} ds  \le C_{T} \varepsilon_{0}. 
\end{split}
\end{equation}
We also see that 
\begin{equation} \label{eq:5.19}
\begin{split}
\| \nabla^{2} I_{12}[u](t) \|_{p} +\| \nabla^{2} J_{1}[g](t) \|_{p} \le C_{T} \varepsilon_{0}
\end{split}
\end{equation}
as in the proof of \eqref{eq:5.16} and \eqref{eq:5.18}.
Recalling \eqref{eq:2.5}, 
we obtain the desired estimate \eqref{eq:2.13} for $1<p<2$ 
by \eqref{eq:5.18} and \eqref{eq:5.19}.
We show the estimate \eqref{eq:2.14}.
When $2 \le p \le p_{0}$, 
the desired estimate follows from the interpolation between $\dot{H}^{3}$ and $\dot{W}^{3, p_{0}}$.
For $1 \le p \le 2$,
we apply the same method of the proof of \eqref{eq:2.13}, with replaced by \eqref{eq:5.15} with $f=\nabla F(u(s))$,
to obtain the desired conclusion. 
The remainder part, the case $p_{0}=3$, is shown in the same way. 
We complete the proof of Theorem \ref{thm:2.3}.
%
\section{Stability}
In this section, we prove Theorem \ref{thm:2.2}.
At first, we formulate the Cauchy problem \eqref{eq:2.7}-\eqref{eq:2.8} into the following integral equation on $\tilde{u}$: 
\begin{equation} \label{eq:6.1}
\begin{split}
\tilde{u}(t) = \tilde{u}_{lin}(t)+ I_{2}[\tilde{u}](t)+ I_{3}[\tilde{u}](t)+ I_{4}[\tilde{u}](t)
\end{split}
\end{equation}
by the Duhamel principle, where
\begin{equation*}
\begin{split}
 \tilde{u}_{lin}(t) & := K_{0}(t) \ast f_{0} +K_{1}(t) \ast f_{1}, \\
 I_{2}[\tilde{u}](t)& := \int_{0}^{t} K_{1}(t-s) \ast F(\tilde{u}(s)) ds, \\
 I_{3}[\tilde{u}](t) & := \int_{0}^{t} K_{1}(t-s) \ast (\nabla \tilde{u}(s) \nabla^{2} u_{per}(s) )ds, \\
 I_{4}[\tilde{u}](t) & := \int_{0}^{t} K_{1}(t-s) \ast (\nabla u_{per}(s) \nabla^{2} \tilde{u}(s)) ds.
\end{split}
\end{equation*}
In what follows, we use the notation 
\begin{equation*}
\begin{split}
 I_{3l}[\tilde{u}](t) & := \int_{0}^{t} K_{1l}(t-s) \ast (\nabla \tilde{u}(s) \nabla^{2} u_{per}(s)) ds, \\
 I_{4l}[\tilde{u}](t) & := \int_{0}^{t} K_{1l}(t-s) \ast (\nabla u_{per}(s) \nabla^{2} \tilde{u}(s)) ds
\end{split}
\end{equation*}
for $l=L,M,H$.
%
%
%
%
%
%
%
%
%
%
Introducing the function space $X_{2}$ by 
\begin{equation*}
\begin{split}
X_{2}:= \left\{ 
u \in C([0,\infty); \dot{W}^{3, \frac{5}{2}} \cap \dot{H}^{3} \cap \dot{H}^{1}) \cap C^{1}([0,\infty); W^{1, \frac{5}{2}} \cap H^{1}) ; \| u \|_{X_2}< \infty \right\}
\end{split}
\end{equation*}
with the norm 
\begin{equation*}
\begin{split}
\| u \|_{X_2} :=  \sup_{t \in [0,\infty)} & \left\{ 
(1+t)^{\frac{3}{4}}\| \nabla u(t) \|_{2} +(1+t)^{\frac{7}{4}}\| \nabla^{3} u(t) \|_{2} ++(1+t)^{2}\| \nabla^{3} u(t) \|_{\frac{5}{2}} 
\right. \\
& + \left. (1+t)^{2} \| \nabla u (t) \|_{\infty}  +(1+t)^{\frac{5}{4}}\| \nabla \partial_{t} u(t) \|_{2} +(1+t)^{\frac{3}{4}}\| \partial_{t} u(t) \|_{2} \right. \\
& + \left. (1+t)^{\frac{3}{2}} \| \nabla \partial_{t} u (t) \|_{\frac{5}{2}} 
+(1+t)\| \partial_{t} u(t) \|_{\frac{5}{2}} \right\}, 
\end{split}
\end{equation*}
we define the mapping $\Phi_{2}$ by 
\begin{equation*}
\begin{split}
\Phi_{2}[\tilde{u}](t) := \tilde{u}_{lin}(t)+ I_{2}[\tilde{u}](t)+ I_{3}[\tilde{u}](t)+ I_{4}[\tilde{u}](t)
\end{split}
\end{equation*}
on $\tilde{\mathbb{B}}_{R}:= \{ u \in X_{2}; \| u \|_{X_{2}} \le R \}$.
As in the previous section, we need to prove the estimates 
\begin{equation} \label{eq:6.2}
\begin{split}
\| \Phi_{2}[\tilde{u}]  \|_{X_{2}} \le R
\end{split}
\end{equation}
and 
\begin{equation} \label{eq:6.3}
\begin{split}
\| \Phi_{2}[\tilde{u}] - \Phi_{2}[\tilde{v}] \|_{X_{2}} \le \frac{1}{2} \| \tilde{u}-\tilde{v} \|_{X_{2}}
\end{split}
\end{equation}
for $\tilde{u},\tilde{v} \in \tilde{\mathbb{B}}_{R}$,
where $R>0$ is determined later.
Here we note that the estimates of $\tilde{u}_{lin}(t)$ and $I_{2}[\tilde{u}](t)$ in $X_{2}$ immediately follow from the  method in \cite{K-T1} and \cite{K-T2} by applying the estimates \eqref{eq:3.1}-\eqref{eq:3.5} and \eqref{eq:5.3}-\eqref{eq:5.5}:
\begin{align}
& \| \tilde{u}_{lin} \|_{X_{2}} \le C_{0}  \| f_{0}, f_{1} \|_{Y_{1}}, \label{eq:6.4}\\
& \| I_{2}[\tilde{u}] \|_{X_{2}} \le C_{2} \| \tilde{u} \|_{X_{2}}^{2}, \label{eq:6.5} \\
& \| I_{2}[\tilde{u}] -  I_{2}[\tilde{v}] \|_{X_{2}} \le \tilde{C}_{2} R \| \tilde{u} - \tilde{v} \|_{X_{2}} \label{eq:6.6}
\end{align}
for $\tilde{u}$, 
$\tilde{v}  \in \tilde{\mathbb{B}}_{R}$ and $C_{0}$, $C_{2}$, $\tilde{C}_{2}>0$.
The proof of Theorem \ref{thm:2.2} is completed if we prove the estimates 
\begin{align}
& \| I_{j}[\tilde{u}] \|_{X_{2}} \le C_{j} \varepsilon_{0} \| \tilde{u} \|_{X_{2}} , \label{eq:6.7} \\
& \| I_{j}[\tilde{u}] -  I_{j}[\tilde{v}] \|_{X_{2}} \le \tilde{C}_{j} \varepsilon_{0} \| \tilde{u} - \tilde{v} \|_{X_{2}} \label{eq:6.8}
\end{align}
for $\tilde{u}$, 
$\tilde{v}  \in \tilde{\mathbb{B}}_{R}$ and $C_{j}$, $\tilde{C}_{j}>0$ with $j=3,4$.
Indeed, once we have the estimate \eqref{eq:6.7},
choosing 
\begin{equation*} 
\begin{split}
R & := 3 C_{0}  \| f_{0}, f_{1} \|_{Y_{1}}+ 3(C_{3}+C_4{}) \varepsilon_{0}, \\
\tilde{\varepsilon}_{1} &:= \frac{1}{\max\{ 3(C_{3}+C_{4}), 9C_{2}(C_{0}+C_{3} +C_{4}) \}}, \\
\varepsilon_{1}' & := \frac{1}{2 \max \{ 3 C_{0} \tilde{C}_{2}, 3(C_{3} +C_{4})+\tilde{C}_{3} +\tilde{C}_{4} \} },
\end{split}
\end{equation*}
we see that 
\begin{equation*} 
\begin{split}
\| \Phi_{2}[\tilde{u}]  \|_{X_{2}}  
& \le C_{0}  \| f_{0}, f_{1} \|_{Y_{1}} + C_{2} \| \tilde{u} \|_{X_{2}}^{2}
+(C_{3} +C_{4})\varepsilon_{0} \| \tilde{u} \|_{X_{2}} \\
& \le \frac{R}{3} + C_{2} R^{2} + (C_{3} +C_{4})\varepsilon_{0} R \le R
\end{split}
\end{equation*}
by \eqref{eq:6.4}, \eqref{eq:6.5}
for $0< \varepsilon_{1} \le \tilde{\varepsilon}_{1}$ and 
$\| f_{0}, f_{1} \|_{Y_{1}} + \varepsilon_{0} < \varepsilon_{1}$, which proves \eqref{eq:6.2}.
Moreover, taking  
\begin{equation*} 
\begin{split}
\varepsilon_{1}' & := \frac{1}{2 \max \{ 3 C_{0} \tilde{C}_{2}, 3(C_{3} +C_{4})+\tilde{C}_{3} +\tilde{C}_{4} \} },
\end{split}
\end{equation*}
we use the estimates \eqref{eq:6.6} and \eqref{eq:6.8} to have  
\begin{equation*} 
\begin{split}
\| \Phi_{2}[\tilde{u}] - \Phi_{2}[\tilde{v}] \|_{X_{2}} & \le 
\left(\tilde{C}_{2} R + (\tilde{C}_{3}+\tilde{C}_{4}) \varepsilon_{0} \right) 
\| \tilde{u} -\tilde{v} \|_{X_{2}} \\
&= \left\{ 3C_{0} \tilde{C}_{2} \| f_{0}, f_{1} \|_{Y_{1}} +\left( 
3(C_{3} +C_{4})+\tilde{C}_{3} +\tilde{C}_{4}
\right) \varepsilon_{0} \right\} \| \tilde{u} -\tilde{v} \|_{X_{2}} \\
& \le \max \{ 3 C_{0} \tilde{C}_{2}, 3(C_{3} +C_{4})+\tilde{C}_{3} +\tilde{C}_{4} \} 
\left( \| f_{0}, f_{1} \|_{Y_{1}} + \varepsilon_{0} \right)
\| \tilde{u} -\tilde{v} \|_{X_{2}} \\
& \le \frac{1}{2} \| \tilde{u} -\tilde{v} \|_{X_{2}} 
\end{split}
\end{equation*}
for $0< \varepsilon_{1} \le \varepsilon_{1}'$ and $\| f_{0}, f_{1} \|_{Y_{1}} + \varepsilon_{0} < \varepsilon_{1}$, 
which shows the desired estimate \eqref{eq:6.3}.  
\subsection{Low frequency estimates}
In this subsection we collect the proof of the estimates of low frequency parts of $I_{3}[\tilde{u}](t)$ and $I_{4}[\tilde{u}](t)$.
We begin with the estimates for $I_{3L}[\tilde{u}](t)$.
To do so, we apply the estimates \eqref{eq:3.2} and \eqref{eq:2.13} to see that
\begin{equation} \label{eq:6.9}
\begin{split}
\| \nabla  I_{3L}[\tilde{u}](t) \|_{2} & \le C \int_{0}^{t} (1+t-s)^{-\frac{3}{4}} \| \nabla \tilde{u}(s) \nabla^{2} u_{per}(s) \|_{1} ds \\
& \le C \int_{0}^{t} (1+t-s)^{-\frac{3}{4}} 
\left\| \nabla \tilde{u}(s) \right\|_{10} \left\| \nabla^{2} u_{per}(s) \right\|_{\frac{10}{9}} ds \\ 
& \le C  \varepsilon_{0} \| \tilde{u} \|_{X_{2}} \int_{0}^{t} (1+t-s)^{-\frac{3}{4}}  (1+s)^{-\frac{7}{4}} ds \\ 
& \le C  \varepsilon_{0} (1+t)^{-\frac{3}{4}} \| \tilde{u} \|_{X_2},
\end{split}
\end{equation}
where we used the fact that 
\begin{equation} \label{eq:6.10}
\begin{split}
\left\| \nabla \tilde{u}(s) \right\|_{10} \le 
\left\| \nabla \tilde{u}(s) \right\|_{\infty}^{\frac{4}{5}} 
\left\| \nabla \tilde{u}(s) \right\|_{2}^{\frac{1}{5}} 
\le C \| \tilde{u} \|_{X_2} (1+s)^{-\frac{7}{4}}.
\end{split}
\end{equation}
By the same way, we also have
\begin{equation} \label{eq:6.11}
\begin{split}
\| \nabla^{3}  I_{3L}[\tilde{u}](t) \|_{2} & \le C \int_{0}^{t} (1+t-s)^{-\frac{7}{4}} \| \nabla \tilde{u}(s) \nabla^{2} u_{per}(s) \|_{1} ds \\
& \le C \int_{0}^{t} (1+t-s)^{-\frac{7}{4}} 
\left\| \nabla \tilde{u}(s) \right\|_{10} \left\| \nabla^{2} u_{per}(s) \right\|_{\frac{10}{9}} ds \\ 
& \le C  \varepsilon_{0} \| \tilde{u} \|_{X_{2}} \int_{0}^{t} (1+t-s)^{-\frac{7}{4}}  (1+s)^{-\frac{7}{4}} ds \\ 
& \le C  \varepsilon_{0}
 (1+t)^{-\frac{7}{4}} \| \tilde{u} \|_{X_2}.
\end{split}
\end{equation}
Moreover, the estimates \eqref{eq:3.2}, \eqref{eq:2.13} and \eqref{eq:6.10} yield 
\begin{equation} \label{eq:6.12}
\begin{split}
& \| \nabla^{3} I_{3L}[\tilde{u}](t) \|_{\frac{5}{2}} +\| \nabla I_{3L}[\tilde{u}](t) \|_{\infty}  \\
& \le C \int_{0}^{\frac{t}{2}} (1+t-s)^{-2} \| \nabla \tilde{u}(s) \nabla^{2} u_{per}(s) \|_{1} ds \\
& \quad + C \int_{\frac{t}{2}}^{t} (1+t-s)^{-\frac{5}{2}(\frac{3}{4}-\frac{2}{5})-\frac{1}{2}} \| \nabla \tilde{u}(s) \nabla^{2} u_{per}(s) \|_{\frac{4}{3}} ds \\
& \le C \int_{0}^{\frac{t}{2}} (1+t-s)^{-2} 
\left\| \nabla \tilde{u}(s) \right\|_{10} \left\| \nabla^{2} u_{per}(s) \right\|_{\frac{10}{9}} ds \\
& + C \int_{\frac{t}{2}}^{t} (1+t-s)^{-\frac{11}{8}} \| \nabla \tilde{u}(s) \|_{\infty} \| \nabla^{2} u_{per}(s) \|_{\frac{4}{3}}
ds \\
& \le C \varepsilon_{0} \| \tilde{u} \|_{X_{2}} \int_{0}^{\frac{t}{2}} (1+t-s)^{-2}  (1+s)^{-\frac{7}{4}} ds \\ 
& +  C \varepsilon_{0} \| \tilde{u} \|_{X_{2}} \int_{\frac{t}{2}}^{t} (1+t-s)^{-\frac{11}{8}}  (1+s)^{-2} ds \\  
& \le C  \varepsilon_{0} (1+t)^{-2} \| \tilde{u} \|_{X_2}.
\end{split}
\end{equation}
Next we prove the estimates for $I_{4L}[\tilde{u}](t)$.
It is easy to see that 
\begin{equation} \label{eq:6.13}
\begin{split}
\| \nabla I_{4L}[\tilde{u}](t) \|_{2} & \le C \int_{0}^{t} (1+t-s)^{-\frac{3}{4}} \| \nabla u_{per}(s) \nabla^{2} \tilde{u} (s) \|_{1} ds \\
& \le C \int_{0}^{t} (1+t-s)^{-\frac{3}{4}} \| \nabla u_{per}(s) \|_{2} \| \nabla^{2} \tilde{u} (s) \|_{2} ds \\
& \le C  \varepsilon_{0} \| \tilde{u} \|_{X_2} \int_{0}^{t} (1+t-s)^{-\frac{3}{4}}(1+s)^{-\frac{5}{4}}  ds \\
& \le C  \varepsilon_{0} (1+t)^{-\frac{3}{4}} \| \tilde{u} \|_{X_2}
\end{split}
\end{equation}
by \eqref{eq:3.2}, \eqref{eq:2.12} and the estimate 
\begin{equation} \label{eq:6.14}
\begin{split}
\left\| \nabla^{2} \tilde{u}(s) \right\|_{2} \le 
\left\| \nabla^{3} \tilde{u}(s) \right\|_{2}^{\frac{1}{2}} 
\left\| \nabla \tilde{u}(s) \right\|_{2}^{\frac{1}{2}} 
\le C \| \tilde{u} \|_{X_2} (1+s)^{-\frac{5}{4}}.
\end{split}
\end{equation}
For the estimates of $\| \nabla^{3}  I_{4L}[\tilde{u}](t) \|_{2}$, 
$\| \nabla^{3} I_{4L}[\tilde{u}](t) \|_{\frac{5}{2}}$ and $\| \nabla I_{4L}[\tilde{u}](t) \|_{\infty}$,
we observe that 
\begin{equation} \label{eq:6.15}
\begin{split}
\| f g\|_{\frac{9}{7}} \le \| f \|_{\frac{45}{26}} \| g \|_{5}, 
\end{split}
\end{equation}
and
\begin{equation} \label{eq:6.16}
\begin{split}
\| \nabla^{2} \tilde{u}(s) \|_{5} \le C \| \nabla \tilde{u}(s)  \|_{\infty}^{\frac{1}{2}} \| \nabla^{3} \tilde{u}(s)  \|_{\frac{5}{2}}^{\frac{1}{2}} 
\le C (1+s)^{-2} \| \tilde{u} \|_{X_2}
\end{split}
\end{equation}
by \eqref{eq:3.10}.
Then we see that 
\begin{equation} \label{eq:6.17}
\begin{split}
\| \nabla^{3}  I_{4L}[\tilde{u}](t) \|_{2} & \le C \int_{0}^{\frac{t}{2}} (1+t-s)^{-\frac{7}{4}} \| \nabla u_{per}(s) \nabla^{2} \tilde{u}(s) \|_{1} ds \\
& \quad + \int_{\frac{t}{2}}^{t} (1+t-s)^{-\frac{5}{2}(\frac{7}{9}-\frac{1}{2})-\frac{1}{2}} \| \nabla u_{per}(s) \nabla^{2} \tilde{u}(s) \|_{\frac{9}{7}} ds \\
& \le C \int_{0}^{\frac{t}{2}} (1+t-s)^{-\frac{7}{4}} \| \nabla u_{per}(s) \|_{2} \| \nabla^{2} \tilde{u} (s) \|_{2} ds \\
& + C \int_{\frac{t}{2}}^{t} (1+t-s)^{-\frac{43}{36}}
 \| \nabla u_{per} (s) \|_{\frac{45}{26}} \| \nabla^{2} \tilde{u} (s) \|_{5}
ds \\
& \le C \varepsilon_{0} \| \tilde{u} \|_{X_2}  \int_{0}^{\frac{t}{2}} (1+t-s)^{-\frac{7}{4}} (1+s)^{-\frac{5}{4}} ds \\
& + C \varepsilon_{0} \| \tilde{u} \|_{X_2}  \int_{\frac{t}{2}}^{t} (1+t-s)^{-\frac{43}{36}} (1+s)^{-2}
ds \\ 
& \le C  \varepsilon_{0}
(1+t)^{-\frac{7}{4}} \| \tilde{u} \|_{X_2}
\end{split}
\end{equation}
by \eqref{eq:3.2}, \eqref{eq:2.12} and \eqref{eq:6.14}-\eqref{eq:6.16}.
Again, 
applying the estimates \eqref{eq:3.2}, \eqref{eq:2.12} and \eqref{eq:6.14}-\eqref{eq:6.16},
we also obtain 
\begin{equation} \label{eq:6.18}
\begin{split}
& \| \nabla^{3} I_{4L}[\tilde{u}](t) \|_{\frac{5}{2}}  + \| \nabla I_{4L}[\tilde{u}](t) \|_{\infty} \\
& \le C \int_{0}^{\frac{t}{2}} (1+t-s)^{-2} \| \nabla u_{per}(s) \nabla^{2} \tilde{u}(s) \|_{1} ds \\
& + C \int_{\frac{t}{2}}^{t} (1+t-s)^{-\frac{5}{2}(\frac{7}{9}-\frac{2}{5})-\frac{1}{2} } \| \nabla u_{per}(s) \nabla^{2} \tilde{u}(s) \|_{\frac{9}{7}} ds \\
& \le C \int_{0}^{\frac{t}{2}} (1+t-s)^{-2} \| \nabla u_{per}(s) \|_{2} \| \nabla^{2} \tilde{u}(s) \|_{2} ds \\
& + C \int_{\frac{t}{2}}^{t} (1+t-s)^{-\frac{13}{9}}  \| \nabla u_{per}(s) \|_{\frac{45}{26}} \| \nabla^{2} \tilde{u} (s) \|_{5} ds \\
& \le C  \varepsilon_{0} \| \tilde{u} \|_{X_2} \int_{0}^{t} (1+t-s)^{-2}(1+s)^{-\frac{5}{4}}  ds \\ 
& + C   \varepsilon_{0} \| \tilde{u} \|_{X_2}  \int_{\frac{t}{2}}^{t} (1+t-s)^{-\frac{13}{9}} (1+s)^{-2}
ds \\ 
& \le C  \varepsilon_{0} (1+t)^{-2}  \| \tilde{u} \|_{X_2}.
\end{split}
\end{equation}
Next, we prove the estimates for $\nabla \partial_{t} I_{3}[\tilde{u}](t)$.
Noting that $K_{1}(0,x)=0$,
we apply the same argument for the estimate of $\| \nabla  I_{3}[\tilde{u}](t) \|_{2}$ to see that
\begin{equation} \label{eq:6.19}
\begin{split}
\| \nabla \partial_{t} I_{3L}[\tilde{u}](t) \|_{2} 
& \le C \varepsilon_{0} (1+t)^{-\frac{5}{4}} \| \tilde{u} \|_{X_2}, \\
\| \nabla \partial_{t} I_{3L}[\tilde{u}](t) \|_{\frac{5}{2}} 
& \le C \varepsilon_{0} (1+t)^{-\frac{3}{2}} \| \tilde{u} \|_{X_2}, \\
\| \partial_{t} I_{3L}[\tilde{u}](t) \|_{2} 
& \le C \varepsilon_{0} (1+t)^{-\frac{3}{4}} \| \tilde{u} \|_{X_2}, \\
\| \partial_{t} I_{3L}[\tilde{u}](t) \|_{\frac{5}{2}} 
& \le C \varepsilon_{0} (1+t)^{-1} \| \tilde{u} \|_{X_2}.
\end{split}
\end{equation}
By the same method in \eqref{eq:6.13}, 
we also have the estimates for $\| \nabla \partial_{t} I_{4L}[\tilde{u}](t) \|_{2}$, 
$\| \partial_{t} I_{4L}[\tilde{u}](t) \|_{2}$
and $\| \partial_{t} I_{4L}[\tilde{u}](t) \|_{\frac{5}{2}}$ 
as follows:
\begin{equation} \label{eq:6.20}
\begin{split}
\| \nabla \partial_{t} I_{4L}[\tilde{u}](t) \|_{2} & 
\le C \varepsilon_{0} (1+t)^{-\frac{5}{4}} \| \tilde{u} \|_{X_2}, \\
\| \partial_{t} I_{4L}[\tilde{u}](t) \|_{2} 
& \le C  \varepsilon_{0} (1+t)^{-\frac{3}{4}} \| \tilde{u} \|_{X_2}, \\
\| \partial_{t} I_{4L}[\tilde{u}](t) \|_{\frac{5}{2}} & 
\le C \varepsilon_{0} (1+t)^{-1} \| \tilde{u} \|_{X_2}.
\end{split}
\end{equation}
Finally we show the estimate for $\| \nabla \partial_{t} I_{4L}[\tilde{u}](t) \|_{\frac{5}{2}}$. 
Noting that 
$$
\| \nabla u_{per}(s) \nabla^{2} \tilde{u}(s) \|_{1} 
\le \| \nabla u_{per}(s) \|_{\frac{5}{3}} \| \nabla^{2} \tilde{u}(s) \|_{\frac{5}{2}}
$$
and 
\begin{equation*} 
\begin{split}
\| \nabla^{2} \tilde{u}(s) \|_{\frac{5}{2}} 
& \le C\| \nabla \tilde{u}(s) \|_{\frac{5}{2}}^{\frac{1}{2}}
\| \nabla^{3} \tilde{u}(s) \|_{\frac{5}{2}}^{^\frac{1}{2}} 
\le C \varepsilon_{1} (1+t)^{-\frac{3}{2}} \| \tilde{u} \|_{X_2},
\end{split}
\end{equation*}
we obtain 
\begin{equation} \label{eq:6.21}
\begin{split}
\| \nabla \partial_{t} I_{4L}[\tilde{u}](t) \|_{\frac{5}{2}}   
& \le C \int_{0}^{t} (1+t-s)^{-\frac{3}{2}} \| \nabla u_{per}(s) \nabla^{2} \tilde{u}(s) \|_{1} ds \\
& \le C \int_{0}^{t} (1+t-s)^{-\frac{3}{2}} 
\| \nabla u_{per}(s) \|_{\frac{5}{3}} \| \nabla^{2} \tilde{u}(s) \|_{\frac{5}{2}} ds \\
& \le C \varepsilon_{0} \| \tilde{u} \|_{X_2} 
\int_{0}^{t} (1+t-s)^{-\frac{3}{2}}(1+s)^{-\frac{3}{2}}  ds \\ 
& \le C  \varepsilon_{0} (1+t)^{-\frac{3}{2}}  \| \tilde{u} \|_{X_2} .
\end{split}
\end{equation}

We complete the estimates for the low frequency parts, $I_{3L}[\tilde{u}](t)$ and $I_{4L}[\tilde{u}](t)$. 
\subsection{Middle and high frequency estimates}
%
%
In this subsection, we prove the estimates for the middle and the high frequency parts, 
$I_{3M}[\tilde{u}](t)+I_{3H}[\tilde{u}](t)$ and $I_{4M}[\tilde{u}](t)+I_{4H}[\tilde{u}](t)$.
First, 
we estimate $\| \nabla^{3} ( I_{3M}[\tilde{u}](t) + I_{3H}[\tilde{u}](t) )\|_{2}$.
Noting that 
\begin{equation} \label{eq:6.22}
\begin{split}
\| \nabla (\nabla \tilde{u}(s) \nabla^{2} u_{per}(s)) \|_{2} & \le \| \nabla^{2} \tilde{u}(s) \nabla^{2} u_{per}(s) \|_{2} 
+\| \nabla \tilde{u}(s) \nabla^{3} u_{per}(s) \|_{2} \\
& \le \| \nabla^{2} \tilde{u}(s) \|_{5} \| \nabla^{2} u_{per}(s) \|_{\frac{10}{3}} + \| \nabla \tilde{u}(s)\|_{\infty} \| \nabla^{3} u_{per}(s) \|_{2} \\
& \le C \varepsilon_{0} \| \tilde{u} \|_{X_2} (1+s)^{-2}
\end{split}
\end{equation}
by the estimates \eqref{eq:2.13}, \eqref{eq:2.14}, \eqref{eq:6.16} and 
\begin{equation} \label{eq:6.23}
\| f g \|_{2} \le \| f \|_{5} \| g \|_{\frac{10}{3}},
\end{equation} 
we see that
\begin{equation} \label{eq:6.24}
\begin{split}
\| \nabla^{3} ( I_{3M}[\tilde{u}](t) + I_{3H}[\tilde{u}](t) )\|_{2} & 
\le C \int_{0}^{t} e^{-c(t-s)} \| \nabla (\nabla \tilde{u}(s) \nabla^{2} u_{per}(s)) \|_{2} ds \\
& \le C \varepsilon_{0} \| \tilde{u} \|_{X_2} \int_{0}^{t} e^{-c(t-s)} (1+s)^{-2} ds \\ 
& \le C  \varepsilon_{0}
 (1+t)^{-2} \| \tilde{u} \|_{X_2} 
\end{split}
\end{equation}
by \eqref{eq:3.4}, \eqref{eq:3.5} and \eqref{eq:6.22}. 
Similarly, observing that
\begin{equation} \label{eq:6.25}
\begin{split}
\| \nabla (\nabla u_{per}(s) \nabla^{2} \tilde{u}(s) ) \|_{2} & 
\le \| \nabla^{2} u_{per}(s) \nabla^{2} \tilde{u}(s) \|_{2} 
+\| \nabla u_{per}(s) \nabla^{3} \tilde{u}(s) \|_{2} \\
& \le \| \nabla^{2} u_{per}(s) \|_{\frac{10}{3}} \| \nabla^{2} \tilde{u}(s) \|_{5} + 
\| \nabla u_{per}(s) \|_{10} \| \nabla^{3} \tilde{u}(s)\|_{\frac{5}{2}} \\
& \le C \varepsilon_{1} \| \tilde{u} \|_{X_2} (1+s)^{-2}
\end{split}
\end{equation}
by the estimates \eqref{eq:6.23}, 
$
\| f g \|_{2} \le \| f \|_{\frac{5}{2}} \| g \|_{10}
$, 
\eqref{eq:6.16}, \eqref{eq:2.12} and \eqref{eq:2.13},
we apply the estimates \eqref{eq:3.4}, \eqref{eq:3.5} and \eqref{eq:6.25} to have
\begin{equation} \label{eq:6.26}
\begin{split}
\| \nabla^{3} ( I_{4M}[\tilde{u}](t) + I_{4H}[\tilde{u}](t) )\|_{2} & 
\le C \int_{0}^{t} e^{-c(t-s)} \| \nabla (\nabla u_{per}(s) \nabla^{2} \tilde{u}(s)) \|_{2} ds \\
& \le C \varepsilon_{0} \| \tilde{u} \|_{X_2} \int_{0}^{t} e^{-c(t-s)} (1+s)^{-2} ds \\ 
& \le C  \varepsilon_{0}
 (1+t)^{-2} \| \tilde{u} \|_{X_2}. 
\end{split}
\end{equation}
By the similar way,  
we also arrive at the estimates 
\begin{equation} \label{eq:6.27}
\begin{split}
\| \nabla ( I_{3M}[\tilde{u}](t) + I_{3H}[\tilde{u}](t) )\|_{2} 
+
\| \nabla ( I_{4M}[\tilde{u}](t) + I_{4H}[\tilde{u}](t) )\|_{2} 
\le C  \varepsilon_{0} (1+t)^{-2} \| \tilde{u} \|_{X_2}
\end{split}
\end{equation}
by \eqref{eq:3.4} taking $p=q=2$, 
$\alpha= \tilde{\alpha}=\alpha_{1}=1$ and $\ell=0$,
\begin{equation} \label{eq:6.28}
\begin{split}
\| \partial_{t} ( I_{3M}[\tilde{u}](t) + I_{3H}[\tilde{u}](t) )\|_{2} 
+
\| \partial_{t} ( I_{4M}[\tilde{u}](t) + I_{4H}[\tilde{u}](t) )\|_{2} 
& \le C  \varepsilon_{0}
 (1+t)^{-2} \| \tilde{u} \|_{X_2},
\end{split}
\end{equation}
by \eqref{eq:3.4} taking $p=q=2$, 
$\alpha=0$ and $\alpha_{1} = \ell = \tilde{\ell}=1$, and 
\begin{equation} \label{eq:6.288}
\begin{split}
& \| \nabla \partial_{t} ( I_{3M}[\tilde{u}](t) + I_{3H}[\tilde{u}](t) )\|_{2} 
+
\| \nabla \partial_{t} ( I_{4M}[\tilde{u}](t) + I_{4H}[\tilde{u}](t) )\|_{2} \\
& \le C  \varepsilon_{0}
 (1+t)^{-2} \| \tilde{u} \|_{X_2},
\end{split}
\end{equation}
by \eqref{eq:3.4} taking $p=q=2$, 
$\alpha=\alpha_{1}=\ell=1$ and $\tilde{\ell}=0$.
For the estimate of $\dot{W}^{3,\frac{5}{2}}$ norms, 
combining the estimates \eqref{eq:2.13}-\eqref{eq:2.14} and \eqref{eq:6.16} gives 
\begin{equation} \label{eq:6.29}
\begin{split}
\| \nabla (\nabla \tilde{u}(s) \nabla^{2} u_{per}(s)) \|_{\frac{5}{2}} & \le \| \nabla^{2} \tilde{u}(s) \nabla^{2} u_{per}(s) \|_{\frac{5}{2}} 
+\| \nabla \tilde{u}(s) \nabla^{3} u_{per}(s) \|_{\frac{5}{2}} \\
& \le \| \nabla^{2} \tilde{u}(s) \|_{5} \| \nabla^{2} u_{per}(s) \|_{5} + \| \nabla \tilde{u}(s)\|_{\infty} \| \nabla^{3} u_{per}(s) \|_{\frac{5}{2}} \\
& \le C \varepsilon_{0} (1+s)^{-2} \| \tilde{u} \|_{X_2}
\end{split}
\end{equation}
and 
\begin{equation} \label{eq:6.30}
\begin{split}
\| \nabla (\nabla u_{per}(s) \nabla^{2} \tilde{u}(s)) \|_{\frac{5}{2}} 
& \le \| \nabla^{2} u_{per}(s) \nabla^{2} \tilde{u}(s) \|_{\frac{5}{2}} 
+\| \nabla u_{per}(s) \nabla^{3} \tilde{u}(s) \|_{\frac{5}{2}} \\
& \le \| \nabla^{2} u_{per}(s) \|_{5} \| \nabla^{2} \tilde{u}(s) \|_{5} 
+ \| \nabla u_{per}(s) \|_{\infty} \| \nabla^{3} \tilde{u}(s)\|_{\frac{5}{2}} \\
& \le C \varepsilon_{0} (1+s)^{-2} \| \tilde{u} \|_{X_2},
\end{split}
\end{equation}
where we used the facts that $\| f g \|_{\frac{5}{2}} \le \| f \|_{5} \| g \|_{5}$.
Therefore it follows from \eqref{eq:3.4}, \eqref{eq:3.5}, \eqref{eq:6.29} and \eqref{eq:6.30} 
that 
\begin{equation} \label{eq:6.31}
\begin{split}
& \| \nabla^{3} ( I_{3M}[\tilde{u}](t) + I_{3H}[\tilde{u}](t) )\|_{\frac{5}{2}} +\| \nabla^{3} ( I_{4M}[\tilde{u}](t) + I_{4H}[\tilde{u}](t) )\|_{\frac{5}{2}} \\
& 
\le C \int_{0}^{t} e^{-c(t-s)} ( \| \nabla (\nabla \tilde{u}(s) \nabla^{2} u_{per}(s)) \|_{\frac{5}{2}} +\| \nabla (\nabla u_{per}(s) \nabla^{2} \tilde{u}(s) ) \|_{\frac{5}{2}}) ds \\
& \le C \varepsilon_{0} \| \tilde{u} \|_{X_2} \int_{0}^{t} e^{-c(t-s)} (1+s)^{-2} ds \\ 
& \le C  \varepsilon_{0}
 (1+t)^{-2} \| \tilde{u} \|_{X_2}. 
\end{split}
\end{equation}
By the same way, 
we also have 
\begin{equation} \label{eq:6.32}
\begin{split}
& \| \nabla \partial_{t} ( I_{3M}[\tilde{u}](t) + I_{3H}[\tilde{u}](t) )\|_{\frac{5}{2}} 
+\| \nabla \partial_{t} ( I_{4M}[\tilde{u}](t) + I_{4H}[\tilde{u}](t) )\|_{\frac{5}{2}} \\
& \le C  \varepsilon_{0}
 (1+t)^{-2} \| \tilde{u} \|_{X_2}
\end{split}
\end{equation}
by \eqref{eq:3.4} taking $p=q=\frac{5}{2}$, 
$\alpha=\alpha_{1}=\ell=1$ and $\tilde{\ell}=0$,
and
\begin{equation} \label{eq:6.33}
\begin{split}
\| \partial_{t} ( I_{3M}[\tilde{u}](t) + I_{3H}[\tilde{u}](t) )\|_{\frac{5}{2}} 
+
\| \partial_{t} ( I_{4M}[\tilde{u}](t) + I_{4H}[\tilde{u}](t) )\|_{\frac{5}{2}} 
& \le C  \varepsilon_{0}
 (1+t)^{-2} \| \tilde{u} \|_{X_2}
\end{split}
\end{equation}
by \eqref{eq:3.4} taking $p=q=\frac{5}{2}$, 
$\alpha=0$ and $\alpha_{1} = \ell = \tilde{\ell}=1$.
Finally we show the estimate of $\dot{W}^{1,\infty}$ norms.
Applying the estimate 
\begin{equation*}
\begin{split}
\| \nabla ( K_{1M}(t) + K_{1H}(t) ) \ast g \|_{\infty} \le Ce^{-ct} \| \nabla g \|_{2}
\end{split}
\end{equation*}
in \cite{K-T1}, \eqref{eq:6.22} and \eqref{eq:6.25},
we obtain
\begin{equation} \label{eq:6.34}
\begin{split}
& \| \nabla ( I_{3M}[\tilde{u}](t) + I_{3H}[\tilde{u}](t) )\|_{\infty} +\| \nabla ( I_{4M}[\tilde{u}](t) + I_{4H}[\tilde{u}](t) )\|_{\infty} \\
& 
\le C \int_{0}^{t} e^{-c(t-s)} ( \| \nabla (\nabla \tilde{u}(s) \nabla^{2} u_{per}(s)) \|_{2} +\| \nabla (\nabla u_{per}(s) \nabla^{2} \tilde{u}(s) ) \|_{2}) ds \\
& \le C \varepsilon_{0} \| \tilde{u} \|_{X_2} \int_{0}^{t} e^{-c(t-s)} (1+s)^{-2} ds \\ 
& \le C  \varepsilon_{0}
 (1+t)^{-2} \| \tilde{u} \|_{X_2}. 
\end{split}
\end{equation}
We complete the estimates for the middle and the high frequency parts of 
$I_{3}[\tilde{u}](t)$ and $I_{4}[\tilde{u}](t)$.
\subsection{Proof of stability of the time periodic solution}
As mentioned at the beginning of this section, 
we need to show the estimates \eqref{eq:6.7} and \eqref{eq:6.8}. 
We only prove the estimate \eqref{eq:6.7}.
The estimate \eqref{eq:6.8} is shown by the same way. 

By the estimates \eqref{eq:6.9} and \eqref{eq:6.27}, 
we see that 
\begin{equation} \label{eq:6.35}
\begin{split}
\| \nabla  I_{3}[\tilde{u}](t) \|_{2}  
& \le C  \varepsilon_{0} (1+t)^{-\frac{3}{4}} \| \tilde{u} \|_{X_2}.
\end{split}
\end{equation}
Similarly, we also have a series of the estimates:
\begin{equation} \label{eq:6.36}
\begin{split}
\| \nabla^{3}  I_{3}[\tilde{u}](t) \|_{2}  
& \le C  \varepsilon_{0} (1+t)^{-\frac{7}{4}} \| \tilde{u} \|_{X_2}
\end{split}
\end{equation}
by \eqref{eq:6.11} and \eqref{eq:6.26}, 
\begin{equation} \label{eq:6.37}
\begin{split}
\| \nabla^{3}  I_{3}[\tilde{u}](t) \|_{\frac{5}{2}} +\| \nabla I_{3}[\tilde{u}](t) \|_{\infty}   
& \le C  \varepsilon_{0} (1+t)^{-2} \| \tilde{u} \|_{X_2}
\end{split}
\end{equation}
by \eqref{eq:6.12}, \eqref{eq:6.31} and \eqref{eq:6.34} and 
\begin{equation} \label{eq:6.38}
\begin{split}
\| \nabla \partial_{t} I_{3}[\tilde{u}](t) \|_{2} 
& \le C  \varepsilon_{0} (1+t)^{-\frac{5}{4}} \| \tilde{u} \|_{X_2}, \\
\| \nabla \partial_{t} I_{3}[\tilde{u}](t) \|_{\frac{5}{2}} 
& \le C  \varepsilon_{0} (1+t)^{-\frac{3}{2}} \| \tilde{u} \|_{X_2}, \\
\| \partial_{t} I_{3}[\tilde{u}](t) \|_{2} 
& \le C  \varepsilon_{0} (1+t)^{-\frac{3}{4}} \| \tilde{u} \|_{X_2}, \\
\| \partial_{t} I_{3}[\tilde{u}](t) \|_{\frac{5}{2}} 
& \le C  \varepsilon_{0} (1+t)^{-1} \| \tilde{u} \|_{X_2}.
\end{split}
\end{equation}
by \eqref{eq:6.19}, \eqref{eq:6.28}, \eqref{eq:6.288}, \eqref{eq:6.32} and \eqref{eq:6.33}.
Thus we obtain the estimate \eqref{eq:6.7} for $j=3$ by \eqref{eq:6.35}-\eqref{eq:6.38}.
By the same manner,
we arrive at the estimate \eqref{eq:6.7} for $j=4$
by \eqref{eq:6.13}, \eqref{eq:6.17}, \eqref{eq:6.18}, \eqref{eq:6.20}, \eqref{eq:6.21},
\eqref{eq:6.26}-\eqref{eq:6.288} and \eqref{eq:6.31}-\eqref{eq:6.34}.    
We completes the proof of Theorem \ref{thm:2.2}.

\vspace*{5mm}
\noindent
\textbf{Acknowledgments. }
Y. Kagei was supported in part by JSPS KAKENHI Grant Numbers JP20H00118, JP24H00185.
H. Takeda was partially supported by JSPS KAKENHI Grant Numbers JP19K03596, JP24K06822.


\end{document}